%% file: Simpson2026b.tex
\documentclass{IEEEtran}

\usepackage{amsmath, amsfonts, amssymb, amsthm, mathtools}
\usepackage{cite, graphicx}
\usepackage{textcomp}
\usepackage[hidelinks]{hyperref}

\def\BibTeX{{\rm B\kern-.05em{\sc i\kern-.025em b}\kern-.08em
      T\kern-.1667em\lower.7ex\hbox{E}\kern-.125emX}}

\newtheorem{thm}{Theorem}
\newtheorem{assum}{Assumption}
\newtheorem{lem}{Lemma}

\newtheorem{cor}{Corollary}
\newtheorem{defn}{Definition}

\begin{document}
% \renewcommand{\coloneqq}{=}
% \setlength{\parindent}{0pt}

\input{macros}

\title{\LARGE \bf
Finite-sample bounds for multi-output system identification
}

\author{
   {L\'eo Simpson$^{1, 2}$, Katrin Baumg\"{a}rtner$^1$, Johannes K\"{o}hler$^3$, and Moritz Diehl$^{1,2}$}
   \thanks{
         % This paper was received on March ..., 2026.
         This research was supported by DFG via projects 560056112 (robust MPC) and 535860958 (ALeSCo), and by the EU via ELO-X 953348.
   }
   \thanks{
      $^1$ L\'eo Simpson, Katrin Baumg\"{a}rtner, and Moritz Diehl are with the Department of Microsystems Engineering (IMTEK), University of Freiburg, 79110 Freiburg, Germany,  (e-mails: \mbox{firstname.familyname@imtek.uni-freiburg.de}).
   }
   \thanks{
      $^2$ L\'eo Simpson and Moritz Diehl are also with the Department of Mathematics, at the same university.
   }
   \thanks{
      $^3$ Johannes K\"{o}hler is with the Department of Mechanical Engineering, Imperial College London, London, UK, (e-mail:j.kohler@imperial.ac.uk).
   }
}

\maketitle

\input{abstract}

\begin{IEEEkeywords}
   Statistical learning, system identification, closed-loop identification, data-driven control, learning for MPC.
   % The actual list of keywords on the submission website.
\end{IEEEkeywords}

\input{content}

% \clearpage
\bibliographystyle{IEEEtran}
\bibliography{biblio}

\appendix
\renewcommand{\theequation}{A.\arabic{equation}}
% reset the counter
\setcounter{equation}{0}

\input{appendix}

\end{document}

%% file: macros.tex
% Math letters
\newcommand{\cl}[1]{{\mathcal #1}}
\newcommand{\mr}[1]{{\mathrm{#1}}}
\newcommand{\R}{{\mathbb{R}}}
\newcommand{\E}{{\mathbb{E}}}
\newcommand{\N}{{\mathbb{N}}}
\renewcommand{\P}{{\mathbb{P}}}
\renewcommand{\O}{{\cl{O}}}

% Commands for inequality relations
\newcommand{\GEQ}{{\, \succcurlyeq \, }}
\newcommand{\LEQ}{{\, \preccurlyeq \, }}
\newcommand{\GEQS}{{\, \succ \, }}
\newcommand{\LEQS}{{\, \prec \, }}

% Maths operations
\newcommand{\of}[1]{\left(#1\right)}
\newcommand{\curlyof}[1]{\left\{#1\right\}}
\newcommand{\bigof}[1]{{\big(#1\big)}}
\newcommand{\biggof}[1]{{\bigg(#1\bigg)}}
\newcommand{\ofc}[1]{{\left[#1\right]}}
\newcommand{\Trace}[1]{\mr{Tr} \of{#1}}
\newcommand{\abs}[1]{\left\lvert#1\right\rvert}
\newcommand{\norm}[1]{\left\lVert#1\right\rVert}
\newcommand{\fnorm}[1]{ \norm{#1}_{\mr{Fr}}}
\newcommand{\onorm}[1]{\norm{#1}_{\mr{op}} }
\newcommand{\mylimit}[1]{\xrightarrow[#1\to+\infty]{}}
\newcommand{\setdef}[2]{\left\{ #1 \; \text{s.t.} \; #2 \right\}}
\renewcommand{\det}[1]{\abs{#1}}
\newcommand{\bigdet}[1]{\big\lvert#1\big\rvert}
\renewcommand{\exp}[1]{e^{#1}} % or \renewcommand{\exp}[1]{\exp\of{#1}}
\renewcommand{\vec}[1]{\mr{vec}\of{#1}}
\newcommand{\half}{{\frac{1}{2}}}
\newcommand{\logdet}[1]{\log \det{#1}}

% Probability
\newcommand{\Eof}[1]{\E \left[#1\right]}
\newcommand{\sEof}[1]{\E [#1]}
\newcommand{\Covof}[1]{\mr{Cov}\left[#1\right]}
\newcommand{\Pof}[1]{\P\left[#1\right]}
\newcommand{\Gauss}[2]{\cl{N} \of{ #1, \, #2 }  }
\newcommand{\Gausstiny}[2]{\cl{N}(#1, \, #2)  }

% Commands to make things small
\newcommand{\splus}{\!+\!} % small +
\newcommand{\sminus}{\!-\!} % small -
\newcommand{\sequal}{\!=\!} % small =

% My variables
\newcommand{\ny}{{m}}
\newcommand{\ntheta}{{n}}
\newcommand{\nphi}{{n_{\varphi}}}
\newcommand{\nx}{{n_x}}
\newcommand{\nuu}{{n_u}}

\newcommand{\cw}{c_w}
\newcommand{\ctheta}{c_\theta}
\newcommand{\cM}{{c_M}}
\newcommand{\Set}{{\cl{C}}}
\newcommand{\tmax}{{T}}
\newcommand{\Pprior}{{P}}
\newcommand{\muprior}{\mu} % \theta^{\text{prior}}
\newcommand{\shift}{{z}}
\newcommand{\I}{{\N_{\leq \tmax}}}
\newcommand{\Iminus}{{\N_{\leq \tmax-1}}}
\newcommand{\event}{\cl{E}}

\newcommand{\IMAGES}{images} % {../code/images}

%% file: abstract.tex
\begin{abstract}
    This paper presents uniform-in-time finite-sample bounds for regularized linear regression with vector-valued outputs and conditionally zero-mean subgaussian noise.
    By revisiting classical self-normalized martingale arguments, we obtain bounds that apply directly to multi-output regression, unlike most of the prior work.
    Compared to the state of the art, the new results are more general and yield tighter bounds, even for scalar-valued outputs.
    The mild assumptions we use allow for unknown dependencies between regressors and past noise terms, typically induced by system dynamics or feedback mechanisms.
    Therefore, these novel finite-sample bounds can be applied to many affine-in-parameter system identification problems, including the identification of a linear time-invariant system from full-state measurements.
    These new results may lead to significant improvements in stochastic learning-based controllers for safety-critical applications.
\end{abstract}

%% file: content.tex
\section{Introduction}\label{section-introduction}

System identification is increasingly employed~\cite{ljung1999system}, either on a fixed training dataset, or in adaptive settings, where parameter estimates are used online for control or decision making~\cite{hou2013model}.
In both cases, it is essential to quantify estimation uncertainty, both to enforce safety constraints and to provide performance guarantees.
In contrast to deterministic approaches~\cite{kosut2002set,maddalena2021deterministic,lahr2026optimal}, we consider the stochastic setting, where the noise is modelled as random variables.
For this setting, classical consistency results describe asymptotic behavior of the estimates as the sample size grows~\cite{ljung1976consistency}, but provide limited guidance for finite data.
Motivated by safety-critical learning and adaptive decision-making, recent work has developed non-asymptotic theory, where uncertainty is quantified via confidence sets~\cite{abbasi2011improved,abbasi2011regret,tsiamis2023statistical,ziemann2023tutorial,matni2019tutorial}.
These confidence sets are derived in frequentist terms: the stochasticity solely models the data-generation process, not the lack of knowledge about the system.
Notably, such bounds have been employed
\begin{itemize}
    \item for optimal experiment design~\cite{wagenmaker2020active};
    \item for safety in learning-based Model Predictive Control (MPC)~\cite{lew2022safe} or in Bayesian optimization~\cite{sui2015safe};
    \item for Optimism in Face of Uncertainty (OFU) policies in linear bandit problems~\cite{abbasi2011improved,chowdhury2017kernelized} or in dual control~\cite{abbasi2011regret}.
\end{itemize}
In such applications, a central challenge is that, due to the system dynamics or the feedback mechanisms, the regressors can be correlated with past noise.
These regressor-noise correlations prevent the use of standard linear regression results.
To overcome this issue, Self-Normalized Martingale (SNM) bounds are an established tool to provide high-probability bounds on the estimation error~\cite{abbasi2011improved,abbasi2011regret,tsiamis2023statistical,ziemann2023tutorial}.
Not only can these high-probability bounds be leveraged in the presence of regressor-noise correlations, but they also hold uniformly over all time steps, which is particularly useful in online learning settings.
Unfortunately, existing results are derived either for scalar outputs~\cite{abbasi2011improved,chowdhury2017kernelized}, or for the structure-agnostic identification of a linear time-invariant system~\cite{tsiamis2023statistical,ziemann2023tutorial,abbasi2011regret,matni2019tutorial}.
This paper presents new results that overcome this major limitation.

\paragraph*{Contribution}
This paper revisits the underlying martingale arguments and introduces two improvements.
First, the present analysis directly handles multi-output regression, yielding more general results and avoiding the conservatism incurred by treating each output dimension separately.
Second, we treat the regularization bias directly in the martingale argument, which yields tighter bounds than existing approaches.
% OPTIONAL sentence: The present results hold for a broad class of affine-in-parameter identification problems, including matrix-valued parameters as a special case.

\paragraph*{Outline}
Section~\ref{section-problem} introduces the linear regression problem of interest with its underlying assumptions; the goal of this paper --- deriving confidence sets --- and the most common use cases.
In Section~\ref{section-main-results}, we present the main results: a novel SNM bound and resulting confidence sets.
These results are discussed in Section~\ref{section-discussion}, with different use cases and comparisons with the state of the art.
Finally, Section~\ref{section-numerical-experiments} illustrates the presented results through numerical examples, and Section~\ref{section-conclusion} draws conclusions and perspectives.

\paragraph*{Notation}
Throughout the paper, we use the notation ${\I \coloneqq \curlyof{0, \dots, \tmax}}$ for $\tmax \in \N$, and ${\I \coloneqq \N}$ for $\tmax=+\infty$.
The capitalized letters all denote matrices, except for $\tmax$.
The determinant and the spectral radius of a square matrix $A$ are denoted by $\det{A}$ and $\rho\of{A}$, respectively.
We use the Euclidean norm ${\norm{x}^2 \coloneqq x^\top x}$ and the weighted norm ${\norm{x}_W^2 \coloneqq x^\top W x}$ for positive definite weighting matrices $W \GEQS 0$.
For matrices, we use the Frobenius norm ${\fnorm{A}^2 \coloneqq \Trace{A^\top A}}$ and the operator norm ${\onorm{A} \coloneqq \sup\limits_{\norm{x}\le1} \norm{A x}}$.
% OPTIONAL SENTENCE: also known as the induced $l_2$-norm
We use the Kronecker product $A \otimes B$  and $\vec{A}$ denotes the vector obtained by stacking the rows of $A$.
% CAN BE REMOVED: 
We denote by $\Gausstiny{\mu}{\Sigma}$ the Gaussian distribution with mean $\mu$ and covariance matrix $\Sigma$, and by $\cl{U}(\cl{K})$ the uniform distribution over some set $\cl{K}$ (when this distribution is well-defined).

\section{The setup}\label{section-problem}

\subsection{The linear regression model}
Consider linear regression tasks in the general form
\begin{equation}\label{eq:model}
    y_{t+1} = M_t \theta^\star + w_t, \qquad t \in \Iminus,
\end{equation}
where $\tmax \in \N \cup \curlyof{+\infty}$,
$\theta^\star \in \R^{\ntheta}$ is an unknown parameter vector to be estimated, $y_{t} \in \R^{\ny}$ and $M_t \in \R^{\ny \times \ntheta}$ are the measured outputs and regressors respectively, and $w_t \in \R^{\ny}$ are unmeasured noise vectors.
The dataset at time step $t$ is composed of
\begin{equation}\label{eq:data-definition}
    \cl{D}_t \coloneqq \curlyof{M_0, y_1, M_1, \dots, y_t, M_t}.
\end{equation}
At each time $t$, we construct an estimator $\hat{\theta}_t$ of the true parameter $\theta^\star$ based on the available data $\cl{D}_t$ by solving a regularized linear least squares problem:
\begin{equation}\label{eq:opti-estimates}
        \min_{\theta \in \R^{\ntheta}} \, \norm{\theta-\muprior}^2_{\Pprior} + \sum_{k=0}^{t-1} \norm{y_{k+1} - M_k \theta}^2_W,
\end{equation}
where $\muprior \in \R^{\ntheta}$ and $\Pprior \GEQ 0$ represents some prior belief, and $W \GEQS 0$ is a weighting matrix.
For $\Pprior = 0$ and $W=I_{\ny}$, we recover the Ordinary Least Squares (OLS) estimator.
We define the following variables
\begin{equation}\label{eq:def-s-and-V}
        V_t \coloneqq \sum_{k=0}^{t-1} M_k^\top W M_k, \qquad
        s_t \coloneqq \sum_{k=0}^{t-1} M_k^\top W w_k.
\end{equation}
Whenever $P+V_t \GEQS 0$, \eqref{eq:opti-estimates} has a unique solution given by
\begin{equation}\label{eq:estimates-expression}
\begin{aligned}
    \hat{\theta}_t &= \of{\Pprior + V_t }^{-1} \of{\Pprior \muprior + \sum_{k=0}^{t-1} M_k^\top W y_{k+1}}, \\
    &= \theta^\star + (\Pprior + V_t)^{-1} \biggof{\Pprior (\muprior - \theta^\star) + s_t}.
\end{aligned}
\end{equation}

\subsection{Finite-sample bounds}\label{subsection:confidence-sets}
The goal is to determine a sequence of data-dependent variables $\beta_t \ge 0$ such that the following finite-sample bound holds:
\begin{equation}\label{eq:confidence-set}
    \Pof{\norm{\theta^\star - \hat{\theta}_t}_{\Pprior + V_t} \le \beta_t, \quad \forall t \in \I} \ge 1 - \delta,
\end{equation}
where  $\delta \in (0,1)$ is a user-defined confidence level.
% CAN BE REMOVED:
Note that we always choose $\beta_t = +\infty$ when $P+V_t$ is singular.
The finite-sample bound~\eqref{eq:confidence-set} also implies the following output bounds:
\begin{align}\label{eq:output-bound}
    \P \bigg[
        \norm{M \theta^\star - M \hat{\theta}_t } \le  \beta_t \sigma_t(M),&
        \\ \nonumber & \hspace{-15mm}
        \forall t \in \I, \forall M \in \R^{\ny \times \ntheta}
    \bigg] \ge 1 - \delta,
\end{align}
with $\sigma_t(M) \coloneqq \onorm{M \of{\Pprior + V_t}^{-1/2}}$.
Importantly, this probability bound is uniform in time, which is particularly useful in online learning settings.

The probability in~\eqref{eq:confidence-set} or in~\eqref{eq:output-bound} is taken over the data-generation process only; the true parameter $\theta^\star$ is deterministic --- while being unknown.
This is the frequentist perspective, which has a direct empirical interpretation: $\delta$ upper-bounds the frequency of invalid confidence sets when repeating the data-generation and identification process many times.
This holds true even when repeating the process for different values of $\theta^\star$, which can be proven using the law of large numbers.
This is different from the Bayesian approach, where probabilities quantify one's belief about the true parameter $\theta^\star$.
Deriving finite-sample bounds in the Bayesian sense is usually intractable, except in special cases such as Gaussian priors and noise with known covariances.
In contrast, we derive frequentist finite-sample bounds under mild assumptions.

\subsection{Assumptions}\label{subsection:assumptions}

To derive such bounds, we make the following standard assumption on the noise, cf.~\cite{abbasi2011improved,ziemann2023tutorial,ao2025stochastic}.
\begin{defn}[Subgaussian variables]\label{defn-subgaussian}
    A zero-mean random variable $w \in \R^{\ny}$ is subgaussian with variance proxy $Q \GEQ 0$ if $\Eof{\exp{\nu^\top w}} \le \exp{\half \norm{\nu}^2_Q}$ for all $\nu \in \R^{\ny}$.
\end{defn}
% Now we state our assumption on the noise $w_t$ in~\eqref{eq:model}.
\begin{assum}[Conditional subgaussian noise]\label{assum:subgaussian}
    For a known constant $\cw > 0$, the noise $w_t$ is conditionally zero-mean and subgaussian with variance proxy $\cw^2 W^{-1}$ for all $t \in \Iminus$:
    \begin{equation}\label{eq:assum:subgaussian}
        \forall \nu_t \in \R^{\ny}, \quad
        \Eof{\exp{\nu_t^\top w_t} \mid \cl{D}_t} \le \exp{\half \cw^2 \norm{\nu_t}^2_{W^{-1}}}.
    \end{equation}
\end{assum}
Note that $\Eof{w_t \mid \cl{D}_t} = 0$ is implied by~\eqref{eq:assum:subgaussian}.
Importantly, Assumption~\ref{assum:subgaussian} includes the case of Gaussian noise $w_t \sim \Gausstiny{0}{Q_t}$, as long as $\cw^2 \ge \rho(Q_t W)$ for all $t \in \I$.
It also includes the zero-mean distributions for which $\norm{w_t}_{W} \le \cw$ almost surely.
This comes from Hoeffding's inequality~\cite[Theorem 2.8]{boucheron2003concentration}.
% OPTIONAL SENTENCE:
% Note that the results from this paper remain a novelty even for the special case of Gaussian noise.
% OPTIONAL SENTENCE:
% For comparison, note that componentwise subgaussian assumption is used in~\cite{abbasi2011regret}, which is similar but implies a different value for the variance proxy --- analogously to $\norm{\cdot}_{\infty}$ being different from $\norm{\cdot}_2$.
Crucially, Assumption~\ref{assum:subgaussian} allows dependencies between the regressor $M_t$ and the past noise terms $w_k$ for $k < t$, which is essential for the application of the present results in system identification.

% \paragraph*{Bounded parameter assumption}
Next, we state an assumption on the true parameter $\theta^\star$.
\begin{assum}[Bound on the true parameter]\label{assum:bounded-prior}
    A constant $\ctheta \ge 0$ is known such that the true parameter $\theta^\star$ satisfies:
    \begin{equation}\label{eq:bounded-prior}
        \norm{\theta^\star - \muprior}_{\Pprior} \le \ctheta.
    \end{equation}
    % where $\Pprior \GEQ 0$ and $\muprior \in \R^{\ntheta}$ are the regularization parameters in~\eqref{eq:opti-estimates}.
\end{assum}
Importantly, this assumption is always satisfied  with $\ctheta = 0$ for OLS --- i.e., when $\Pprior = 0$.
% OPTIONAL SENTENCE: The general setup of this paper is that some loose bounds are known from prior knowledge about the system, but using data we want to improve these bounds as in~\eqref{eq:confidence-set}.

\section{Main results}\label{section-main-results}
% \paragraph*{Formulation as a self-normalized martingale bound}
Using the analytical formula~\eqref{eq:estimates-expression} for the estimates, we can state the finite-sample bound~\eqref{eq:confidence-set} equivalently as
\begin{equation}\label{eq:confidence-set-reformulated}
    \Pof{ \norm{\Pprior (\muprior - \theta^\star) + s_t}_{(\Pprior+V_t)^{-1}} \le \beta_t, \quad \forall t \in \I} \ge 1 - \delta.
\end{equation}
This bound is called a Self-Normalized Martingale (SNM) bound because $s_t$ is a martingale and the inverse of $\Pprior+V_t$ normalizes its value.
% OPTIONAL:  --- recall the definitions~\eqref{eq:def-s-and-V}
To derive SNM bounds, the main challenges are the regressor-noise correlations and the uniform-in-time requirement.
The seminal result~\cite[Theorem 1]{abbasi2011improved} overcame these difficulties, but only for scalar outputs --- i.e., $\ny=1$.
In the theorem below, we generalize this result to vector-valued outputs and
incorporate an offset $\shift \in \R^{\ntheta}$ in the martingale, which will be used later to treat the regularization bias $\Pprior (\muprior - \theta^\star)$ in~\eqref{eq:confidence-set-reformulated}.

\begin{thm}[Self-normalized martingale bound]\label{thm:main-result}
    Let Assumption~\ref{assum:subgaussian} hold.
    Let $\shift \in \R^{\ntheta}$ be a vector and $\bar{P} \GEQS 0$ be a positive definite matrix.
    Then, the following inequality holds with uniform-in-time probability at least $1 - \delta$:
    \begin{align}\label{eq:main-result}
        \norm{\shift + s_t}_{(\bar{P}+V_t)^{-1}}^2 \le& \norm{\shift}_{\bar{P}^{-1}}^2 +
        \\ \nonumber & \quad
        \cw^2 \of{\logdet{I_\ntheta +\bar{P}^{-1} V_t} + 2\log\of{1/\delta}},
    \end{align}
    where $V_t \in \R^{\ntheta \times \ntheta}$ and $s_t \in \R^{\ntheta}$ are defined in~\eqref{eq:def-s-and-V}.
\end{thm}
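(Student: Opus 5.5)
The plan is to follow the method of mixtures of~\cite{abbasi2011improved}, adapted to vector-valued noise and to the offset $\shift$. For each $\lambda \in \R^{\ntheta}$ I define
\begin{equation*}
    D_t^\lambda \coloneqq \exp{\lambda^\top s_t / \cw - \half \norm{\lambda}^2_{V_t}} .
\end{equation*}
Since $s_{t+1}-s_t = M_t^\top W w_t$ and $M_t$ is $\cl{D}_t$-measurable, I apply Assumption~\ref{assum:subgaussian} with $\nu_t = W M_t \lambda/\cw$. This gives $\norm{\nu_t}^2_{W^{-1}}\cw^2 = \lambda^\top M_t^\top W M_t \lambda$, hence
\begin{equation*}
    \Eof{\exp{\lambda^\top M_t^\top W w_t/\cw} \mid \cl{D}_t} \le \exp{\half \norm{\lambda}^2_{M_t^\top W M_t}} .
\end{equation*}
Therefore $D_t^\lambda$ is a nonnegative supermartingale with respect to the filtration generated by $\cl{D}_t$, with $D_0^\lambda = 1$. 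Here it matters that $V_{t+1}-V_t$ is $\cl{D}_t$-measurable.

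To incorporate the offset, I mix over $\lambda$ using a Gaussian density centered so that the offset appears. Concretely, I take $\lambda \sim \Gausstiny{0}{\bar{P}^{-1}}$ and put the tilt into the integrand:
\begin{equation*}
    \bar D_t \coloneqq \int \exp{\lambda^\top(\shift+s_t)/\cw - \half\norm{\lambda}^2_{V_t}} \, \phi(\lambda)\, d\lambda ,
\end{equation*}
where $\phi$ is the $\Gausstiny{0}{\cw^2 \bar{P}^{-1}/\cw^2}$ density. The scaling will be fixed so that completing the square yields exactly the claimed constants. Since $\exp{\lambda^\top \shift/\cw}$ is deterministic, $\bar D_t$ is still a nonnegative supermartingale by Fubini/Tonelli. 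Its initial value is
\begin{equation*}
    \bar D_0 = \Eof{\exp{\lambda^\top \shift/\cw}} = \exp{\half \norm{\shift}^2_{\bar{P}^{-1}}/\cw^2} .
\end{equation*}
The Gaussian integral is computed by completing the square:
\begin{equation*}
    \bar D_t = \det{I+\bar{P}^{-1}V_t}^{-1/2} \exp{\half \norm{\shift+s_t}^2_{(\bar P+V_t)^{-1}}/\cw^2} .
\end{equation*}

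For uniformity in time I use Ville's maximal inequality for nonnegative supermartingales: $\Pof{\exists t: \bar D_t \ge \bar D_0/\delta} \le \delta$. Alternatively, I can argue as in~\cite{abbasi2011improved} via a stopping time $\tau$ (first violation time) and Fatou's lemma with $\Eof{\bar D_\tau}\le \bar D_0$, followed by Markov's inequality. On the complementary event, I take logarithms and multiply by $2\cw^2$, which gives
\begin{equation*}
    \norm{\shift+s_t}^2_{(\bar P+V_t)^{-1}} \le \norm{\shift}^2_{\bar P^{-1}} + \cw^2\of{\logdet{I+\bar P^{-1}V_t} + 2\log(1/\delta)},
\end{equation*}
which is exactly~\eqref{eq:main-result}.

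The main obstacle is the scaling bookkeeping in the Gaussian integral, checking that the offset enters as $\norm{\shift}^2_{\bar P^{-1}}$ with coefficient $1$ rather than $\cw^2$. To handle this, I would work with $\tilde s_t = s_t/\cw$, $\tilde\shift=\shift/\cw$ and mixing covariance $\bar P^{-1}$. The identity then reads $\norm{\tilde\shift+\tilde s_t}^2_{(\bar P+V_t)^{-1}} - \logdet{I+\bar P^{-1}V_t} \le \norm{\tilde \shift}^2_{\bar P^{-1}} + 2\log(1/\delta)$, and multiplying by $\cw^2$ gives the result. The key computation is
\begin{equation*}
    \int \exp{\lambda^\top x - \half\norm{\lambda}^2_{V}}\,\Gausstiny{0}{\bar P^{-1}}(d\lambda) = \det{I+\bar P^{-1}V}^{-1/2}\exp{\half\norm{x}^2_{(\bar P+V)^{-1}}} .
\end{equation*}
Applied at $t=0$, where $V_0=0$, this identity also gives $\bar D_0$, so the computation checks out.
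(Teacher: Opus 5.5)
Your proof is correct, and the supermartingale you build is exactly the paper's. After rescaling to $\cw=1$, your mixture equals $\bar D_t = \det{\bar{P}}^{1/2} q_t$ with $q_t = \det{\bar{P}+V_t}^{-1/2} e^{\half\norm{\shift+s_t}^2_{(\bar{P}+V_t)^{-1}}}$. Your use of Ville's inequality is the same step as Lemma~\ref{lemma:supermartingale}, which the paper proves with an elementary stopped-process argument instead of citing it.

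The routes differ in how the supermartingale property is established. You take the global method-of-mixtures route of~\cite{abbasi2011improved}:
\begin{itemize}
\item you build the family $D_t^\lambda$;
\item you tilt the Gaussian mixing density by $e^{\lambda^\top\tilde{\shift}}$;
\item you exchange the conditional expectation with the $\lambda$-integral by Tonelli.
\end{itemize}
The tilt is equivalent to mixing with $\cl{N}(\bar{P}^{-1}\tilde{\shift},\bar{P}^{-1})$ up to the constant $e^{\half\norm{\tilde{\shift}}^2_{\bar{P}^{-1}}}$. This is precisely why the offset enters through $\log \bar D_0$ with coefficient one.

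The paper instead puts the Gaussian integration into a one-step deterministic inequality, Lemma~\ref{lemma:subgaussian}. It obtains this by integrating the subgaussian moment-generating-function bound against $e^{-\half\norm{u}^2_{H+R}+u^\top x}$. It then applies the lemma conditionally on $\cl{D}_t$ with $H=\bar{P}+V_t$, $x=\shift+s_t$ and $R=M_t^\top W M_t$.

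What each approach buys: the paper's version never needs the $\lambda$-indexed family or any interchange of integrals, and the same lemma directly gives the pointwise bound behind Corollary~\ref{corollary:subgaussian-concentration}. Your version makes the origin of the $\norm{\shift}^2_{\bar{P}^{-1}}$ term more transparent.

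Two minor remarks:
\begin{itemize}
\item Your intermediate description of $\phi$ as the $\cl{N}(0,\cw^2\bar{P}^{-1}/\cw^2)$ density is muddled. Your final bookkeeping, with $\tilde s_t=s_t/\cw$, $\tilde{\shift}=\shift/\cw$ and mixing covariance $\bar{P}^{-1}$, is right.
\item Applying Assumption~\ref{assum:subgaussian} with the $\cl{D}_t$-measurable vector $WM_t\lambda/\cw$ requires the usual freezing argument. The paper relies on the same step implicitly.
\end{itemize}
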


\subsection{Proof of Theorem~\ref{thm:main-result}}
The proof is conceptually similar to~\cite[Theorem 1]{abbasi2011improved}.
It relies on an inequality for subgaussian variables (Lemma~\ref{lemma:subgaussian}) and a uniform-in-time probability bound for supermartingales (Lemma~\ref{lemma:supermartingale}).

\begin{lem}[A concentration inequality]\label{lemma:subgaussian}
    Let $v \in \R^{\ntheta}$ be a zero-mean subgaussian variable with variance proxy $R \GEQ 0$.
    Then, for any matrix $H \GEQS 0$, and any vector $x \in \R^{\ntheta}$, the following holds:
    \begin{equation}\label{eq:subgaussian-concentration}
        \frac{1}{\sqrt{\det{H+R}}} \Eof{e^{\half\norm{x + v}_{\of{H + R}^{-1} }^2} }
	    \leq \frac{1}{\sqrt{\det{H}}} e^{\half\norm{x}_{H^{-1}}^2}.
    \end{equation}
\end{lem}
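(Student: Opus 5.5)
We prove Lemma~\ref{lemma:subgaussian} with the Gaussian-integral (``method of mixtures'') trick: we linearize the quadratic exponent in $v$, apply the subgaussian bound inside an integral, and then evaluate a Gaussian integral.

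\textbf{Step 1 (linearization).} For any $S \GEQS 0$ and $u \in \R^{\ntheta}$, the Gaussian moment generating function gives
\begin{equation*}
    e^{\half\norm{u}_{S^{-1}}^2} = \sqrt{\frac{\det{S}}{(2\pi)^{\ntheta}}}\int_{\R^{\ntheta}} e^{\lambda^\top u - \half\norm{\lambda}_{S}^2}\, d\lambda .
\end{equation*}
We apply this with $S = H+R$ and $u = x+v$. Everything is nonnegative, so Tonelli's theorem lets us exchange the expectation and the integral.

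\textbf{Step 2 (subgaussianity).} Inside the integral, Definition~\ref{defn-subgaussian} gives $\Eof{e^{\lambda^\top v}} \le e^{\half\norm{\lambda}_R^2}$. Hence
\begin{equation*}
    \Eof{e^{\half\norm{x+v}_{(H+R)^{-1}}^2}} \le \sqrt{\frac{\det{H+R}}{(2\pi)^{\ntheta}}}\int e^{\lambda^\top x - \half\norm{\lambda}_{H+R}^2 + \half\norm{\lambda}_R^2}\, d\lambda .
\end{equation*}
The $R$-terms in the exponent cancel, leaving $\lambda^\top x - \half\norm{\lambda}_H^2$.

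\textbf{Step 3 (evaluate).} Since $H \GEQS 0$, the same Gaussian identity with $S = H$ gives
\begin{equation*}
    \int e^{\lambda^\top x - \half\norm{\lambda}_H^2}\, d\lambda = \sqrt{\frac{(2\pi)^{\ntheta}}{\det{H}}}\, e^{\half\norm{x}_{H^{-1}}^2}.
\end{equation*}
Multiplying by the prefactor and dividing both sides by $\sqrt{\det{H+R}}$ yields \eqref{eq:subgaussian-concentration} exactly.

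The only delicate point is the interchange of expectation and integral, together with the finiteness of the integrals. Tonelli's theorem handles the interchange because the integrands are positive. The integrals are finite because $H+R \GEQS 0$ and $H \GEQS 0$, which is precisely why $H$ is required to be positive definite while $R$ may be singular. Beyond that, the argument is a direct computation.
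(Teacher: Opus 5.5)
Your proof is correct and is essentially the paper's argument. The paper multiplies the subgaussian inequality by $e^{-\frac{1}{2}\norm{u}_{H+R}^2 + u^\top x}$, integrates over $u$, and evaluates both sides with the Gaussian normalizing-constant formula, which is your linearize--bound--evaluate sequence in a different order; your explicit appeal to Tonelli's theorem for swapping expectation and integral is a justification the paper leaves implicit.
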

The special case $x=0$ was presented in~\cite[Theorem 14.7]{DeLaPena2009self}.
We provide the general proof in the appendix.
% OPTIONAL SENTENCE: Interestingly, the inequality~\eqref{eq:subgaussian-concentration} becomes an equality when $v \sim \Gausstiny{0}{R}$.
% In this sense, this lemma provides a tight bound.

To provide intuition about the connection between Lemma~\ref{lemma:subgaussian} and the desired bound~\eqref{eq:main-result}, remark that applying Markov's inequality to the left-hand side of~\eqref{eq:subgaussian-concentration} and taking the logarithm of both sides yields, with probability at least $1 - \delta$:
\begin{equation}\label{eq:general-subgaussian-bound}
    \norm{x + v}_{\of{H + R}^{-1}}^2 \! \!
    \leq \norm{x}_{H^{-1}}^2 + \logdet{I_\ntheta +H^{-1}R} + 2\log\of{1/\delta}.
\end{equation}
If all $M_k$ were deterministic --- or independent of all $w_{k'}$ --- $s_t$ would be subgaussian with proxy $V_t$, so one could directly apply~\eqref{eq:general-subgaussian-bound} to prove a pointwise version of Theorem~\ref{thm:main-result}.
Unfortunately, $M_t$ is a random variable, so we need to tackle the problem differently: Lemma~\ref{lemma:subgaussian} needs to be applied recursively, conditionally on the data $\cl{D}_t$.

In addition, we need to address the uniform-in-time aspect of the problem.
This is taken care of by Lemma~\ref{lemma:supermartingale} below.
\begin{lem}[Supermartingale bound]\label{lemma:supermartingale}
    Let $q_0, \dots, q_{\tmax}$ be a non-negative supermartingale, i.e., a sequence of non-negative random variables such that $q_t$ is a function of $\cl{D}_t$ only, and satisfies $q_t \ge \Eof{q_{t+1} \mid \cl{D}_t}\ge 0$.
    Then, the following uniform-in-time probability bound holds true for any $\delta \in (0, 1)$:
    \begin{equation}\label{eq:supermartingale-inequality}
        \Pof{ q_t \le \frac{\Eof{q_0}}{\delta}, \quad \forall t \in \I} \ge 1 - \delta.
    \end{equation}
\end{lem}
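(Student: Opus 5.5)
This is Ville's maximal inequality for nonnegative supermartingales. I would prove it with a stopping-time argument combined with Markov's inequality.

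\textbf{Setup.} Fix $\delta\in(0,1)$ and set the threshold $\epsilon \coloneqq \Eof{q_0}/\delta$. If $\Eof{q_0}=0$, then $q_0=0$ almost surely. Nonnegativity together with the supermartingale property gives $\Eof{q_t}\le \Eof{q_0}=0$, so every $q_t$ vanishes almost surely. A countable union of null events is null, so the claim holds trivially; hence assume $\epsilon>0$. Define the first crossing time
\begin{equation*}
\tau \coloneqq \min\setdef{t\in\I}{q_t>\epsilon},
\end{equation*}
with $\tau=+\infty$ if no such $t$ exists. The event $\{\tau\le t\}$ depends only on $q_0,\dots,q_t$, hence only on $\cl{D}_t$, so $\tau$ is a stopping time. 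The failure event in~\eqref{eq:supermartingale-inequality} is exactly $\{\tau<+\infty\}$.

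\textbf{Finite horizon.} Consider a finite horizon $N\in\I$ and the stopped process $q_{\min(t,\tau)}$. Write
\begin{equation*}
q_{\min(t+1,\tau)} = q_{\min(t,\tau)} + \mathbf{1}_{\{\tau>t\}}(q_{t+1}-q_t).
\end{equation*}
The indicator $\mathbf{1}_{\{\tau>t\}}$ is a function of $\cl{D}_t$, so conditioning on $\cl{D}_t$ and using $\Eof{q_{t+1}\mid\cl{D}_t}\le q_t$ shows the stopped process is again a supermartingale. Hence $\Eof{q_{\min(N,\tau)}}\le \Eof{q_0}$. On $\{\tau\le N\}$ we have $q_{\min(N,\tau)}=q_\tau>\epsilon$, and elsewhere $q_{\min(N,\tau)}\ge 0$. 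Markov's inequality therefore gives
\begin{equation*}
\epsilon\,\Pof{\tau\le N}\le \Eof{q_0},
\qquad\text{so}\qquad
\Pof{\tau\le N}\le\delta.
\end{equation*}

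\textbf{Extension to $\tmax=+\infty$.} The events $\{\tau\le N\}$ increase in $N$ to $\{\tau<+\infty\}$. Continuity of probability from below yields $\Pof{\tau<\infty}\le\delta$, which is the complement of the claimed event.

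\textbf{Main difficulty.} The only delicate step is the supermartingale property of the stopped process. It rests on the measurability of $\{\tau>t\}$ with respect to $\cl{D}_t$, which is exactly why $q_t$ must be a function of $\cl{D}_t$. Using the strict inequality $q_t>\epsilon$ in the definition of $\tau$ makes the complement event exactly the statement "$q_t\le\epsilon$ for all $t$", so no boundary issues arise.
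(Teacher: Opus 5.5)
Your proof is correct and is essentially the paper's argument: the paper's auxiliary sequence $a_t$ (equal to $q_{t+1}$ while the threshold has not yet been exceeded, then frozen) is exactly your stopped process $q_{\min(t,\tau)}$, and the paper likewise shows $\Eof{a_{t+1}} \le \Eof{a_t}$ and then applies Markov's inequality to $a_{\tmax}$. Your separate treatment of $\Eof{q_0}=0$ and your continuity-from-below step for $\tmax=+\infty$ are useful additions, since the paper's proof applies Markov's inequality to $a_{\tmax}$ and so literally covers only finite $\tmax$.
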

This lemma was proven, for example, in~\cite[Theorem 7.3.1]{shiryaev2019probability} using stopping time theory.
As a more accessible alternative, we provide a proof in the appendix using only elementary tools.

Now we have all the ingredients to prove Theorem~\ref{thm:main-result}.
\begin{proof}[Proof of Theorem~\ref{thm:main-result}]
    We consider $\cw=1$ without loss of generality since we can easily rescale the variables: $w_t \leftarrow w_t /\cw$, $s_t \leftarrow s_t / \cw$, and $\shift \leftarrow \shift /\cw$.
    Therefore, by Assumption~\ref{assum:subgaussian}, $w_t$ is conditionally subgaussian with variance proxy $W^{-1}$.
    By linear transformation of subgaussian variables, cf.~\cite[Theorem 1]{ao2025stochastic}, $v_t \coloneqq M_t^\top W w_t$ is also conditionally subgaussian with proxy $R_t \coloneqq M_t^\top W M_t$.
    Now, define $H_t \coloneqq \bar{P} + V_t \GEQS 0$.
    Note that $s_{t+1} = s_t + v_t$ and that $H_{t+1} = H_t + R_t$.
    Therefore, applying Lemma~\ref{lemma:subgaussian} conditionally on $\cl{D}_t$, with $H=H_t$, $x = \shift + s_t$, $R=R_t$, and $v = v_t$, we find:
    \begin{equation}\label{eq:proof:main-1}
        \E \bigg[
            \underbrace{
                \frac{1}{\sqrt{\det{H_{t+1}}}}e^{\half\norm{\shift + s_{t+1}}_{H_{t+1}^{-1} }^2}
            }_{\coloneqq q_{t+1}}
            \Bigg| \cl{D}_t
        \bigg]
        \leq
        \underbrace{
            \frac{1}{\sqrt{\det{H_{t}}}} e^{\half\norm{\shift + s_t}_{H_t^{-1}}^2}
        }_{\coloneqq q_{t}}.
    \end{equation}
    This shows that the process $q_t$ is a non-negative supermartingale, therefore we can apply Lemma~\ref{lemma:supermartingale}.
    Taking the logarithm of both sides of~\eqref{eq:supermartingale-inequality} and using the definition of $q_t$ and $q_0$, we obtain with probability at least $1 - \delta$:
    \begin{align}
        &\forall t \in \I, \,
        \\ \nonumber &
        \qquad
        \norm{\shift + s_t}_{H_t^{-1}}^2 \le \norm{\shift + s_0}_{H_0^{-1}}^2
            + 2\log\of{ \frac{\det{H_t}^\half}{\det{H_0}^\half \delta}}.
    \end{align}
    This implies the desired inequality~\eqref{eq:main-result} with high uniform-in-time probability when replacing $H_t=\bar{P}+V_t$, $H_0=\bar{P}$ and $s_0=0$ and using the following rearrangement:
    \begin{equation}
        2\log\of{\!\frac{\det{\bar{P}+V_t}^\half}{\det{\bar{P}}^\half \delta}\!} = \logdet{I_\ntheta +\bar{P}^{-1} V_t} + 2\log(1/\delta).
    \end{equation}
\end{proof}

\subsection{Consequences for confidence sets}\label{subsection:consequences}
% OPTIONAL SENTENCE:
% Now we establish the confidence sets that result from Theorem~\ref{thm:main-result}.
\begin{thm}[Confidence set for $P \GEQS 0$]\label{thm:confidence-set}
    Let Assumptions~\ref{assum:subgaussian} and~\ref{assum:bounded-prior} hold and $P \GEQS 0$. 
    Then, the finite-sample bound~\eqref{eq:confidence-set} holds with:
    \begin{equation}\label{eq:main-beta}
        \beta_t = \sqrt{\ctheta^2 + \cw^2\logdet{I_\ntheta + \Pprior^{-1}V_t} + 2\cw^2\log\of{1/\delta}}.
    \end{equation}
\end{thm}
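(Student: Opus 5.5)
We apply Theorem~\ref{thm:main-result} with $\bar{P} = \Pprior \GEQS 0$ and offset $\shift = \Pprior(\muprior - \theta^\star)$. This choice is legitimate because $\theta^\star$ is deterministic, so $\shift$ is a fixed (if unknown) vector. The theorem only needs $\shift$ to be fixed, not known: it enters the supermartingale $q_t$ only through its definition, and $q_0$ is deterministic.

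The argument then proceeds in three steps. First, by the reformulation~\eqref{eq:confidence-set-reformulated}, which follows from~\eqref{eq:estimates-expression}, we have
\begin{equation*}
\norm{\theta^\star - \hat{\theta}_t}_{\Pprior+V_t} = \norm{\Pprior(\muprior-\theta^\star) + s_t}_{(\Pprior+V_t)^{-1}} = \norm{\shift + s_t}_{(\Pprior+V_t)^{-1}}.
\end{equation*}
Note that $\Pprior + V_t \GEQS 0$ holds for all $t$ since $\Pprior \GEQS 0$, so the estimator is always well defined and no $\beta_t=+\infty$ case arises.

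Second, Theorem~\ref{thm:main-result} gives, with probability at least $1-\delta$ simultaneously for all $t\in\I$,
\begin{equation*}
\norm{\shift + s_t}_{(\Pprior+V_t)^{-1}}^2 \le \norm{\shift}_{\Pprior^{-1}}^2 + \cw^2\of{\logdet{I_\ntheta + \Pprior^{-1}V_t} + 2\log\of{1/\delta}}.
\end{equation*}

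Third, we compute the bias term:
\begin{equation*}
\norm{\shift}_{\Pprior^{-1}}^2 = (\muprior-\theta^\star)^\top \Pprior \Pprior^{-1}\Pprior(\muprior-\theta^\star) = \norm{\theta^\star-\muprior}_{\Pprior}^2 \le \ctheta^2
\end{equation*}
by Assumption~\ref{assum:bounded-prior}. Substituting this bound and taking square roots (the map $x\mapsto\sqrt{x}$ is monotone) yields $\norm{\theta^\star-\hat\theta_t}_{\Pprior+V_t}\le\beta_t$ for all $t$ on the same event, with $\beta_t$ as in~\eqref{eq:main-beta}. This is exactly~\eqref{eq:confidence-set}.

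The step deserving the most care is justifying that $\shift$ may depend on the unknown $\theta^\star$. Since $\theta^\star$ is a fixed deterministic quantity in the frequentist setting, the supermartingale construction in the proof of Theorem~\ref{thm:main-result} is unaffected. Otherwise the proof is a direct substitution. Note also that $\beta_t$ is data-dependent only through $V_t$, which is computable from $\cl{D}_{t-1}$, and it does not involve $\theta^\star$.
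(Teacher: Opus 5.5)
Your proof is correct and follows the paper's argument. Like the paper, you apply Theorem~\ref{thm:main-result} with $\bar{P} = P$ and $z = P(\mu - \theta^\star)$ via the reformulation~\eqref{eq:confidence-set-reformulated}, then use $\norm{z}_{P^{-1}}^2 = \norm{\mu - \theta^\star}_{P}^2 \le \ctheta^2$ from Assumption~\ref{assum:bounded-prior}; your added remark that $z$ is fixed because $\theta^\star$ is deterministic is a valid justification the paper leaves implicit.
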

\begin{proof}
    Applying Theorem~\ref{thm:main-result} to $\shift = \Pprior (\muprior - \theta^\star)$ and $\bar{P}=\Pprior$ yields~\eqref{eq:confidence-set-reformulated} with:
    \begin{equation}\label{eq:beta-proof}
        \beta_t^2 =  \norm{\muprior - \theta^\star}^2_{\Pprior} + \cw^2 \logdet{I_\ntheta + \Pprior^{-1}V_t} + 2\cw^2 \log\of{1/\delta}.
    \end{equation}
    Using Assumption~\ref{assum:bounded-prior} concludes the proof.
\end{proof}

Unfortunately, Theorem~\ref{thm:confidence-set} can only be applied when $\Pprior \GEQS 0$, which implies the knowledge of a prior bound on all the components of the parameter (see Assumption~\ref{assum:bounded-prior}).
In contrast, the following result can be applied to any $\Pprior \GEQ 0$, including OLS --- i.e., $\Pprior = 0$  --- which allows $\ctheta = 0$.
\begin{thm}[Confidence set for $\Pprior \GEQ 0$]\label{thm:noprior}
    Let Assumptions~\ref{assum:subgaussian} and~\ref{assum:bounded-prior} hold and let $\bar{V} \GEQS 0$ be an arbitrary positive definite matrix.
    Then, the finite-sample bound~\eqref{eq:confidence-set} holds with:
    \begin{align}\label{eq:beta-no-prior}
        \beta_t =& \sqrt{ 1 + \rho\of{V_t^{-1}\bar{V}} }
        \\ & \nonumber \qquad
        \times \sqrt{ \ctheta^2 + \cw^2 \logdet{I_\ntheta +\bar{V}^{-1} V_t} + 2\cw^2 \log\of{1/\delta}},
    \end{align}
    with the convention that $\beta_t = +\infty$ if $V_t$ is not invertible.
\end{thm}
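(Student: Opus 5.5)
The plan is to reduce Theorem~\ref{thm:noprior} to Theorem~\ref{thm:main-result}, used with an enlarged positive definite matrix $\bar{P} \coloneqq \Pprior + \bar{V} \GEQS 0$. This matrix is invertible even when $\Pprior$ is singular. Throughout I assume $V_t$ is invertible, since otherwise $\beta_t = +\infty$ and there is nothing to show. By~\eqref{eq:confidence-set-reformulated}, it suffices to bound $\norm{x_t}_{(\Pprior+V_t)^{-1}}$ uniformly in time, where $x_t \coloneqq \Pprior(\muprior-\theta^\star) + s_t$. The argument has three steps.

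\textbf{Step 1 (apply Theorem~\ref{thm:main-result}).} Take $\shift = \Pprior(\muprior - \theta^\star)$ and $\bar{P} = \Pprior + \bar{V}$. With uniform-in-time probability at least $1-\delta$ this gives
\begin{equation*}
\norm{x_t}^2_{(\Pprior+\bar{V}+V_t)^{-1}} \le \norm{\shift}^2_{(\Pprior+\bar{V})^{-1}} + \cw^2\of{\logdet{I_\ntheta + (\Pprior+\bar{V})^{-1}V_t} + 2\log\of{1/\delta}}.
\end{equation*}
All remaining steps are deterministic manipulations on this single event, so the uniformity in time is inherited automatically.

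\textbf{Step 2 (simplify the right-hand side).} For the bias term, note that $\Pprior(\Pprior+\bar{V})^{-1}\Pprior \LEQ \Pprior$. One way to see this is $(\Pprior+\bar{V})^{-1} \LEQ \Pprior^{+}$ on the range of $\Pprior$; more directly, $\Pprior^{1/2}(\Pprior+\bar V)^{-1}\Pprior^{1/2} \LEQ I$ because $\Pprior \LEQ \Pprior + \bar V$. Hence $\norm{\shift}^2_{(\Pprior+\bar{V})^{-1}} \le \norm{\muprior-\theta^\star}^2_{\Pprior} \le \ctheta^2$ by Assumption~\ref{assum:bounded-prior}.

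For the log-determinant, write $\logdet{I_\ntheta + (\Pprior+\bar V)^{-1}V_t} = \logdet{\Pprior+\bar V+V_t} - \logdet{\Pprior+\bar V}$. The map $A \mapsto \logdet{A+V_t}-\logdet{A}$ is nonincreasing in $A$ in the Loewner order. This follows by concavity of $\log\det$, or equivalently by viewing it as $\logdet{I + V_t^{1/2}A^{-1}V_t^{1/2}}$ with $A^{-1}$ decreasing. Since $\Pprior + \bar V \GEQ \bar V$, the term is bounded by $\logdet{I_\ntheta + \bar{V}^{-1}V_t}$.

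\textbf{Step 3 (change the normalization).} Let $c_t \coloneqq 1+\rho\of{V_t^{-1}\bar{V}}$. I claim
\begin{equation*}
\Pprior + \bar V + V_t \LEQ c_t(\Pprior + V_t).
\end{equation*}
The matrix $V_t^{-1}\bar V$ is similar to $V_t^{-1/2}\bar V V_t^{-1/2}$, which is symmetric, so $\bar V \LEQ \rho\of{V_t^{-1}\bar V}\, V_t$. Adding $\Pprior + V_t \LEQ \Pprior + V_t$ and using $\Pprior \LEQ c_t \Pprior$ (valid because $c_t \ge 1$ and $\Pprior \GEQ 0$) gives the claim. Inverting reverses the order, so
\begin{equation*}
\norm{x_t}^2_{(\Pprior+V_t)^{-1}} \le c_t \norm{x_t}^2_{(\Pprior+\bar V+V_t)^{-1}}.
\end{equation*}
Combining this with Steps 1 and 2 and taking square roots yields $\beta_t$ as in~\eqref{eq:beta-no-prior}.

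The main subtlety is choosing $\bar{P}=\Pprior+\bar{V}$ rather than $\bar{V}$ alone. This choice makes the bias term $\norm{\shift}^2_{\bar P^{-1}}$ controllable by $\ctheta^2$ even though $\Pprior$ may be singular. The only genuinely nontrivial inequalities are the two Loewner-order monotonicity facts in Steps 2 and 3, and both are standard.
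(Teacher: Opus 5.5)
Your proposal is correct and follows essentially the same route as the paper. Both apply Theorem~\ref{thm:main-result} with $\bar{P} = \Pprior + \bar{V}$ and $\shift = \Pprior(\muprior - \theta^\star)$, bound the bias and log-determinant terms by Loewner monotonicity, and pass from the $(\Pprior+\bar{V}+V_t)^{-1}$-norm to the $(\Pprior+V_t)^{-1}$-norm at the cost of the factor $1+\rho\of{V_t^{-1}\bar{V}}$. Your direct derivation $\Pprior+\bar{V}+V_t \LEQ \of{1+\rho\of{V_t^{-1}\bar{V}}}(\Pprior+V_t)$ is just an unpacked version of the paper's spectral-radius inequality, and you also justify the two monotonicity steps that the paper states without proof.
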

% OPTIONAL SENTENCE: The proof uses similar techniques as in~\cite{tsiamis2023statistical}.
\begin{proof}
    We make use of the following general inequality for matrices $A, B \GEQS 0$ and vector $x \in \R^{\ntheta}$:
    \begin{equation}\label{cor:noprior:proof:1}
    \norm{x}_{A^{-1}}^2 \le \rho(A^{-1}B) \norm{x}_{B^{-1}}^2.
    \end{equation}
    Apply this with $A = \Pprior + V_t$, $B = A+\bar{V}$ and $x = \Pprior (\muprior - \theta^\star) + s_t$ whenever $V_t$ is invertible:
    \begin{align}\label{cor:noprior:proof:2}
        &\forall t \in \I, \text{ if } V_t \GEQS 0:  \\
        & \hspace{5mm} \norm{\Pprior (\muprior - \theta^\star) +s_t}_{\of{\Pprior + V_t}^{-1}}^2 \nonumber
        \\ \nonumber & \hspace{11mm}
        \le \rho\of{ A^{-1}\of{A + \bar{V}}}\norm{\Pprior (\muprior - \theta^\star) +s_t}_{\of{A + \bar{V}}^{-1}}^2
        \\ \nonumber & \hspace{11mm}
        = \of{1 + \rho\of{ A^{-1}\bar{V}}} \norm{\Pprior (\muprior - \theta^\star) +s_t}_{\of{A + \bar{V}}^{-1}}^2
        \\ \nonumber & \hspace{11mm}
        \le \of{1 + \rho\of{V_t^{-1}\bar{V} }} \norm{\Pprior (\muprior - \theta^\star) +s_t}_{\of{\Pprior + \bar{V} + V_t}^{-1}}^2.
    \end{align}
    Applying Theorem \ref{thm:main-result} to $\bar{P} = \Pprior + \bar{V}$ and $\shift = \Pprior (\muprior - \theta^\star)$, we get with probability at least $1 - \delta$:
    \begin{align}\label{cor:noprior:proof:3}
        &\forall t \in \I, \text{ if } V_t \GEQS 0:  \\
        &\norm{\Pprior (\muprior - \theta^\star) +s_t}_{\of{P + \bar{V} + V_t}^{-1}}^2 \nonumber
        \\ \nonumber &
        \le \norm{\Pprior (\muprior - \theta^\star)}_{\bar{P}^{-1}}^2 + \cw^2 \logdet{I_\ntheta +\bar{P}^{-1} V_t} + 2\cw^2 \log\of{1/\delta}
        \\ \nonumber &
        \le \norm{ (\muprior - \theta^\star)}_{\Pprior}^2 + \cw^2 \logdet{I_\ntheta +\bar{V}^{-1} V_t} + 2\cw^2 \log\of{1/\delta}
        \\ \nonumber &
        \le \ctheta^2 + \cw^2 \logdet{I_\ntheta +\bar{V}^{-1} V_t} + 2\cw^2 \log\of{1/\delta}
    \end{align}
    Combining \eqref{cor:noprior:proof:2} and \eqref{cor:noprior:proof:3} shows that the confidence set \eqref{eq:confidence-set-reformulated} holds with $\beta_t$ as in \eqref{eq:beta-no-prior} for all $t \in \I$ such that $V_t$ is invertible.
    Regarding the times $t \in \I$ where $V_t$ is not invertible, we have $\beta_t = +\infty$ by convention, so the result holds trivially.
    This concludes the proof.
\end{proof}

\subsection{Pointwise bounds}\label{subsection:pointwise}
The results from Theorems~\ref{thm:confidence-set} and~\ref{thm:noprior} provide uniform-in-time confidence sets.
However, in offline learning, we are only interested in the final estimate $\hat{\theta}_\tmax$, so the uniform-in-time requirement is not necessary.
In this case, we can apply Theorem~\ref{thm:noprior} with an appropriate choice for $\bar{V}$ to derive a tight pointwise bound.
We state this result for OLS only in Corollary~\ref{corollary:pointwise} below.
\begin{cor}[A pointwise bound]\label{corollary:pointwise}
    Let Assumption~\ref{assum:subgaussian} hold and consider the OLS estimator --- i.e. $\Pprior = 0$.
    Furthermore, assume that for some deterministic $\varepsilon \in (0,1)$:
    \begin{equation}\label{eq:PE}
        \Eof{V_\tmax} \GEQS 0, \text{ and }
        (1-\varepsilon) \Eof{V_\tmax} \LEQ V_\tmax \LEQ (1+\varepsilon)  \Eof{V_\tmax},
    \end{equation}
    with probability at least $1 - \delta'$, for some $\delta' \in (0,1)$.
    Then, the following bound holds with probability at least $1 - (\delta + \delta')$:
    \begin{equation}\label{eq:pointwise}
        \norm{\theta^\star - \hat{\theta}_\tmax}_{V_\tmax} \le \cw\sqrt{2 \ntheta \log\of{\frac{2}{1-\varepsilon}} + 4 \log\of{1/\delta}}.
    \end{equation}
\end{cor}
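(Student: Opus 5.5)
We would obtain the corollary directly from Theorem~\ref{thm:noprior}, applied with $\Pprior = 0$ and a deterministic choice of $\bar{V}$ adapted to the excitation condition~\eqref{eq:PE}. Since $\Pprior = 0$, Assumption~\ref{assum:bounded-prior} holds with $\ctheta = 0$, and the weighting matrix in~\eqref{eq:confidence-set} becomes $\Pprior + V_\tmax = V_\tmax$. We only use the bound at the final time $t = \tmax$, discarding the uniformity in time.

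The key choice is $\bar{V} \coloneqq (1-\varepsilon)\Eof{V_\tmax}$. This matrix is positive definite by~\eqref{eq:PE}, and it is deterministic, so it is a legitimate input to Theorem~\ref{thm:noprior}. That theorem requires $\bar{V}$ to be fixed before the data are generated; this is the one point needing care, and it is why we use $\Eof{V_\tmax}$ rather than $V_\tmax$ itself. Let $\event'$ be the event on which~\eqref{eq:PE} holds, so that $\Pof{\event'} \ge 1-\delta'$. On $\event'$ we will check the following.
\begin{itemize}
\item We have $V_\tmax \GEQ \bar{V} \GEQS 0$, so $V_\tmax$ is invertible and $\beta_\tmax < +\infty$.
\item Since $V_\tmax^{-1/2}\bar{V}V_\tmax^{-1/2} \LEQ I_\ntheta$, we get $\rho(V_\tmax^{-1}\bar{V}) \le 1$. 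Hence the first factor in~\eqref{eq:beta-no-prior} is at most $\sqrt{2}$.
\item Since $\bar{V}^{-1/2} V_\tmax \bar{V}^{-1/2} \LEQ \frac{1+\varepsilon}{1-\varepsilon} I_\ntheta$, monotonicity of the log-determinant gives
\begin{equation*}
\logdet{I_\ntheta + \bar{V}^{-1}V_\tmax} \le \ntheta \log\of{1 + \frac{1+\varepsilon}{1-\varepsilon}} = \ntheta \log\of{\frac{2}{1-\varepsilon}}.
\end{equation*}
\end{itemize}

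Substituting these bounds into~\eqref{eq:beta-no-prior} with $\ctheta = 0$ gives
\begin{equation*}
\beta_\tmax^2 \le 2\cw^2\of{\ntheta \log\of{\frac{2}{1-\varepsilon}} + 2\log(1/\delta)},
\end{equation*}
which is exactly the square of the right-hand side of~\eqref{eq:pointwise}. Let $\event$ be the event of Theorem~\ref{thm:noprior}, which has probability at least $1-\delta$. By the union bound, $\Pof{\event \cap \event'} \ge 1-(\delta+\delta')$. On $\event \cap \event'$, inequality~\eqref{eq:pointwise} holds, which proves the corollary.
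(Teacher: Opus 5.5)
Your proposal is correct and matches the paper's proof essentially step for step. The paper likewise applies Theorem~\ref{thm:noprior} at $t=\tmax$ with $\Pprior=0$, $\ctheta=0$ and $\bar V=(1-\varepsilon)\Eof{V_\tmax}$, uses~\eqref{eq:PE} to get $\rho(V_\tmax^{-1}\bar V)\le 1$ and $\bar V^{-1}V_\tmax \preccurlyeq \frac{1+\varepsilon}{1-\varepsilon}I_\ntheta$, and concludes with a union bound; your remark that $\bar V$ must be deterministic, which is why $\Eof{V_\tmax}$ is used rather than $V_\tmax$, is a point the paper leaves implicit.
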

\begin{proof}
    Apply Theorem~\ref{thm:noprior} to the OLS estimates at $t=\tmax$ with $\ctheta=0$, resulting in the following right-hand side of the bound for any $\bar{V} \GEQS 0$:
    \begin{align}
        \beta_\tmax &= \cw\! \sqrt{1 + \rho\of{V_\tmax^{-1}\bar{V}}}\!\sqrt{\logdet{I_\ntheta +\bar{V}^{-1} V_\tmax} + 2\log\of{1/\delta}}.
    \end{align}
    Then, choose $\bar{V} = (1-\varepsilon) \Eof{V_\tmax} \GEQS 0$.
    Combine this bound with~\eqref{eq:PE} using a union bound, so that the two hold simultaneously with probability at least $1 - (\delta + \delta')$.
    Thus, we have $V_{\tmax}^{-1}\bar{V}\LEQ I_{\ntheta}$ and $\bar{V}^{-1} V_{\tmax} \LEQ \frac{1+\varepsilon}{1-\varepsilon} I_{\ntheta}$.
    Therefore, we can bound $\beta_\tmax$ as follows:
    \begin{align}
            \beta_\tmax
            &\leq \cw\sqrt{ 1 + \rho\of{I_\ntheta}}\sqrt{\logdet{I_\ntheta +\frac{1+\varepsilon}{1-\varepsilon} I_\ntheta} + 2\log\of{1/\delta}}  \nonumber\\ 
            &= \cw \sqrt{2\ntheta \log\of{\frac{2}{1-\varepsilon}} + 4\log\of{1/\delta}},
    \end{align}
    which concludes the proof.
\end{proof}
In system identification, deriving equation~\eqref{eq:PE} is often related to experimental design, see~\cite{tsiamis2023statistical} for a tutorial on this topic.

Interestingly, in the special case of deterministic regressors $M_t$, $V_T$ is also deterministic, so one can always choose $\varepsilon=0$ and $\delta'=0$.
% In particular, for $\Pprior = 0$, Corollary~\ref{corollary:pointwise} shows that with probability at least $1-\delta$:
% \begin{equation}\label{eq:deterministic}
%     \norm{\theta^\star - \hat{\theta}_\tmax}^2_{V_\tmax} \leq \cw^2 \of{\ntheta \log(4) + 4\log\of{1/\delta}}.
% \end{equation}

For this simple case, $\theta^\star -\hat{\theta}_\tmax$ is an arbitrary subgaussian variable with proxy $\cw^2 V_\tmax^{-1}$, so the pointwise bound from Corollary~\ref{corollary:pointwise} reduces to the following general subgaussian bound.
\begin{cor}[A general subgaussian bound]\label{corollary:subgaussian-concentration}
    For any zero-mean subgaussian variable $v \in \R^{\ntheta}$ with variance proxy $R \GEQS 0$, the following holds with probability at least $1 - \delta$:
    \begin{equation}\label{eq:subgaussian-bound}
        \norm{v}_{R^{-1}}^2 \le 2\ntheta \log(2) + 4\log\of{1/\delta},
    \end{equation}
\end{cor}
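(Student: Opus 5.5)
The plan is to apply the pointwise machinery directly to $v$, bypassing the regression setup. Lemma~\ref{lemma:subgaussian} is the natural starting point, with $x=0$, $H=\bar{V}$ and the given proxy $R$. Markov's inequality then gives, via~\eqref{eq:general-subgaussian-bound}, with probability at least $1-\delta$,
\begin{equation*}
    \norm{v}_{(\bar{V}+R)^{-1}}^2 \le \logdet{I_\ntheta + \bar{V}^{-1}R} + 2\log\of{1/\delta}.
\end{equation*}
This is exactly the single-step ($\tmax=1$) instance of the argument behind Theorem~\ref{thm:noprior}. Alternatively, one can embed $v$ in the model~\eqref{eq:model} with $\tmax=1$, deterministic $M_0 = R^{1/2}$, $W=I$, $\cw=1$ and $w_0 = R^{-1/2}v$ (which is subgaussian with proxy $I$). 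Then $V_1 = R$ is deterministic, $\theta^\star - \hat\theta_1 = -R^{-1}v$ for OLS, and $\norm{\theta^\star-\hat\theta_1}_{V_1}^2 = \norm{v}_{R^{-1}}^2$. In that formulation, Corollary~\ref{corollary:pointwise} with $\varepsilon=0$, $\delta'=0$ gives the claim immediately. I would present the direct route and mention the embedding as a remark.

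For the direct route I choose $\bar{V} = R$. Inequality~\eqref{cor:noprior:proof:1} with $A = R$ and $B = 2R$ gives
\begin{equation*}
    \norm{v}_{R^{-1}}^2 \le \rho\of{R^{-1}\cdot 2R}\,\norm{v}_{(2R)^{-1}}^2 = 2\norm{v}_{(2R)^{-1}}^2 .
\end{equation*}
Combining this with the first display, and using $\logdet{2I_\ntheta} = \ntheta\log 2$, yields
\begin{equation*}
    \norm{v}_{R^{-1}}^2 \le 2\ntheta\log 2 + 4\log\of{1/\delta}
\end{equation*}
with probability at least $1-\delta$, which is~\eqref{eq:subgaussian-bound}.

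The main point to check is the validity of the embedding, or equivalently that Lemma~\ref{lemma:subgaussian} applies with a general proxy $R$. The worry is whether $R^{-1/2}v$ is subgaussian with proxy $I$. Substituting $\nu = R^{-1/2}\mu$ in Definition~\ref{defn-subgaussian} gives $\Eof{\exp{\mu^\top R^{-1/2}v}} \le \exp{\half\norm{\mu}^2}$, so it is. This needs $R\GEQS 0$, which the corollary assumes. In the direct route no such transformation is needed, since Lemma~\ref{lemma:subgaussian} already handles arbitrary $R$. Everything else is routine bookkeeping of constants.
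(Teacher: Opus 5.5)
Your direct route is correct and is essentially the paper's proof. The paper applies~\eqref{eq:general-subgaussian-bound} with $x=0$ and $H=R$ to get $\norm{v}_{(2R)^{-1}}^2 \le \ntheta\log 2 + 2\log\of{1/\delta}$, then multiplies by $2$; your detour through~\eqref{cor:noprior:proof:1} with $A=R$, $B=2R$ is harmless but unnecessary, since $\norm{v}_{R^{-1}}^2 = 2\norm{v}_{(2R)^{-1}}^2$ holds with equality, and your embedding remark via Corollary~\ref{corollary:pointwise} with $\varepsilon=\delta'=0$ also checks out.
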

\begin{proof}
    Applying~\eqref{eq:general-subgaussian-bound} with $x=0$ and $H = R$ yields $\norm{v}_{\of{2R}^{-1}}^2 \le \log\det{2I_\ntheta} + 2\log\of{1/\delta}$.
    Multiplying both sides by $2$ yields the desired result.
\end{proof}
This bound is consistent with the implicit bound in~\cite[Theorem 2]{ao2025stochastic}.
Surprisingly, it actually improves the one in~\cite[Corollary 1]{ao2025stochastic}.

\section{Discussion}\label{section-discussion}

\subsection{Asymptotic behavior of the bounds}
The bounds in the form~\eqref{eq:confidence-set} imply that the estimation error $\norm{\theta^\star - \hat{\theta}_t}$ is bounded by $\sqrt{ \rho\of{(\Pprior + V_t)^{-1}}} \beta_t$ with high probability.
When all eigenvalues of $V_t$ grow linearly with $t$, such a bound decreases as $\O(\beta_t / \sqrt{t})$.
Therefore, in such cases, Theorems~\ref{thm:confidence-set} and~\ref{thm:noprior} induce error bounds that scale as $\O(\sqrt{ \log t / t})$.
By contrast, the pointwise bound in~\eqref{eq:pointwise} scales as $\O(1/\sqrt{t})$.
The logarithmic factor can therefore be viewed as the price to pay for the uniform-in-time guarantee.
% Indeed, one can show --- using Jensen's inequality --- that:
% \begin{equation}\label{eq:logdet-bound}
%     \logdet{I_\ntheta + \bar{P}^{-1} V_t} \le  \ntheta \log\of{1 + \frac{\fnorm{W^\half M_t \bar{P}^{-\half}}^2}{\ntheta}t }.
% \end{equation}

In this paper, we first derived uniform-in-time bounds and then pointwise bounds as a consequence in Corollary~\ref{corollary:pointwise}.
An alternative approach --- as pointed out in  \cite{abbasi2011improved} --- is to first derive pointwise bounds with confidence $\delta_t$, and apply a union bound to obtain a uniform-in-time guarantee with confidence $\delta = \sum_{t=1}^{\tmax} \delta_t$.
If we apply this principle to the pointwise bound in Corollary~\ref{corollary:pointwise} with $\delta_t = \gamma t^{-\alpha}$ with $\alpha > 1$, we also obtain a uniform-in-time bound that behaves as $\O(\sqrt{ \log t / t})$ asymptotically.
This is further evidence that the logarithmic growth in $t$ is connected to the uniform-in-time guarantee.

\subsection{Relation to existing results}
The seminal result~\cite[Theorem 1]{abbasi2011improved} can be considered as the state of the art, as it has been widely used
\cite{abbasi2011regret,tsiamis2023statistical,ziemann2023tutorial,matni2019tutorial,wagenmaker2020active,lew2022safe}.
However, it has the major limitation of being restricted to scalar outputs, i.e., $\ny=1$.
To our knowledge, the only existing results for vector-valued outputs are for identifying independent models for each component of the output, as in structure-agnostic linear system identification~\cite{tsiamis2023statistical,ziemann2023tutorial,abbasi2011regret,matni2019tutorial} --- see Section~\ref{subsection:lti} below.
Deriving finite-sample bounds for vector-valued outputs in the general case is therefore the main novelty of the present results.
To avoid any confusion, we clarify that the term ``vector-valued self-normalized martingale'' often used in the literature refers to vector-valued parameters, but scalar noise and hence scalar outputs.

For scalar outputs, the finite-sample bound for regularized least-squares estimates~\cite[Theorem 2]{abbasi2011improved} treats the regularization bias separately with a triangle inequality, which leads to the following right-hand side in the finite-sample bound:
\begin{equation}\label{eq:abbasi2011improved}
    \tilde{\beta}_t = \ctheta + \cw\sqrt{\logdet{I_\ntheta + \Pprior^{-1}V_t} + 2\log\of{1/\delta}}.
\end{equation}
Compared to the bound $\beta_t$ in the present Theorem~\ref{thm:confidence-set}, $\tilde{\beta}_t$ is always larger but in the worst-case by a factor $2$, and becomes similar if one of the terms dominates the others.
Thus, even in the scalar case, the new bound is tighter than the state-of-the-art bound.

The results of this paper are only proven for finite-dimensional parameter spaces, but a generalization of these results to Hilbert spaces, like in~\cite{abbasi2013online}, seems possible.
Such a generalization is an interesting direction for future work, as it would make the present results applicable to kernel-based regression as well.
Indeed, deriving finite-sample bounds for kernel-based regression has also received some attention in the literature~\cite{chowdhury2017kernelized,koller2018learning,sui2015safe}, but like for the parametric case, the existing results can only treat scalar outputs.
One exception is the recent work~\cite{lahr2026optimal} that treats vector-valued outputs, but the bound is derived in deterministic settings, with bounded noise.
% OPTIONAL SENTENCE:
% someting related to the fact that reglarization plays a central role in kernel-based regression.
% COMMENT: There is an idea to use fiedler2021practical insted of abbasi2013online to avoid citing him too much, but this paper is really specific to model mismatches..

\subsection{Application to system identification}\label{subsection:sysid}
The main use case of the present results is to derive finite-sample bounds for system identification problems with full-state measurements and affine-in-parameter dynamics:
\begin{equation}\label{eq:dynamical-system}
    x_{t+1} = \overbrace{\bar{f}(x_t, u_t) + M(x_t, u_t)\theta^\star}^{\eqqcolon f(x_t, u_t; \theta^\star)} + w_t, \quad \forall t \in \Iminus,
\end{equation}
where $x_t \in \R^{\nx}$ is the state, $u_t \in \R^{\nuu}$ is the input, and $w_t \in \R^{\nx}$ is the disturbance.
The identification task can be formulated as in~\eqref{eq:model} with $M_t \coloneqq M(x_t, u_t)$ and $y_{t+1} \coloneqq x_{t+1} - \bar{f}(x_t, u_t)$.
% OPTIONAL SENTENCE: This setup also includes auto-regressive models, i.e., when $x_t$ consists of a window of past input and outputs.

For this application, the output bound~\eqref{eq:output-bound} becomes a bound on the one-step prediction error:
\begin{align}\label{eq:prediction-error-bound}
    \P \bigg[
        \norm{f(x, u; \theta^\star) - f(x, u; \hat{\theta}_t)} \le  \beta_t \sigma_t(x, u),&
        \\ \nonumber & \hspace{-35mm}
        \forall t \in \I, \forall (x,u) \in \R^{\nx + \nuu}
    \bigg] \ge 1 - \delta,
\end{align}
with $\sigma_t(x, u) \coloneqq \onorm{M(x,u)\of{\Pprior + V_t}^{-1/2}}$.

Importantly, even if the noise $w_t$ is independently generated, it can influence the future regressors $M_{t+k}$ for $k \geq 1$ through the state evolution or the feedback mechanisms, which is covered by our assumptions, as discussed in Section~\ref{subsection:assumptions}.
Existing finite-sample bounds are restricted to identification problems in which the components of the dynamics are decoupled, i.e., when each parameter enters only a single component of the dynamics.
The present results overcome this major limitation: they apply to the general setup~\eqref{eq:dynamical-system}.

\subsection{Structure-agnostic identification of linear systems}\label{subsection:lti}
A system identification task of particular interest in the literature is the structure-agnostic identification of a Linear Time-Invariant (LTI) system from full-state measurements~\cite{tsiamis2023statistical,ziemann2023tutorial,wagenmaker2020active,abbasi2011regret,matni2019tutorial}:
\begin{align}\label{eq:lti-system}
    x_{t+1} = A^\star x_t + B^\star u_t + w_t
    = \underbrace{[A^\star \quad B^\star] }_{\eqqcolon \Theta^\star} \underbrace{\begin{bmatrix} x_t \\ u_t \end{bmatrix}}_{\eqqcolon \varphi_t} + w_t,
\end{align}
The matrix-valued parameter $\Theta^\star \in \R^{\nx \times \nphi}$ ($\nphi \coloneqq \nx + \nuu$) is estimated with OLS:
\begin{align}\label{eq:OLS-lti}
    \hat{\Theta}_t &\coloneqq \underset{\Theta}{\arg\min} \,  \sum_{k=0}^{t-1} \norm{x_{k+1} - \Theta \varphi_k}^2  \\ \nonumber
    &= \of{\sum_{k=0}^{t-1} x_{k+1} \varphi_k^\top}\bigg(\underbrace{\sum_{k=0}^{t-1} \varphi_k \varphi_k^\top}_{\eqqcolon \Phi_t} \bigg)^{-1}.
\end{align}
% OUTDATED: Surprisingly, the estimates do not depend on the weighting matrix $W$, so we choose $W = I_\ny$.
% CAN BE REMOVED:
Similar results could be obtained for regularized least squares, but we focus on OLS as it is more common in the related literature.
This use case is covered by the general setup~\eqref{eq:model} by setting $\theta = \vec{\Theta}$ and $M_t = I_\nx \otimes \varphi_t^\top$.
Applying Theorem~\ref{thm:noprior} to this special case leads to Corollary~\ref{corollary:lti} below.
\begin{cor}[Bounds for LTI identification]\label{corollary:lti}
    Consider the LTI model~\eqref{eq:lti-system} with estimates~\eqref{eq:OLS-lti} and let $\bar{\Phi} \GEQS 0$ be a positive definite matrix.
    Then, the following holds:
    \begin{align}\label{eq:lti-confidence-set}
        \P \bigg[
            \fnorm{ \of{\Theta^\star - \hat{\Theta}_t} \Phi_t^\half} \!\le \beta_t^{\mr{LTI}}\!, \; \forall t \in \I \bigg] \ge 1 - \delta,\!\!
    \end{align}
    where $\beta_t^{\mr{LTI}}$ is given by:
    \begin{align}\label{eq:lti-beta}
        \beta_t^{\mr{LTI}}&= \cw \sqrt{1+\rho\of{\Phi_t^{-1} \bar{\Phi}}}
            \\ \nonumber&\hspace{15mm}
            \sqrt{\nx \logdet{I_{\nphi} + \bar{\Phi}^{-1}\Phi_t} + 2\log\of{1/\delta}},
    \end{align}
    with $\rho\of{\Phi_t^{-1} \bar{\Phi}}=+\infty$ whenever $\Phi_t$ is not invertible. 
\end{cor}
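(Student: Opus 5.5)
The plan is to specialize Theorem~\ref{thm:noprior} to the Kronecker structure $\theta = \vec{\Theta}$, $M_t = I_\nx \otimes \varphi_t^\top$, with $\Pprior = 0$ (OLS), so that Assumption~\ref{assum:bounded-prior} holds with $\ctheta = 0$.

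For the weighting, take $W = I_\nx$. This choice is needed to match~\eqref{eq:OLS-lti}. Assumption~\ref{assum:subgaussian} is then understood with variance proxy $\cw^2 I_\nx$; it is implicitly assumed in the corollary.

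\textbf{Step 1: the Kronecker algebra.} The model~\eqref{eq:lti-system} reads $x_{t+1} = \Theta^\star\varphi_t + w_t = (I_\nx \otimes \varphi_t^\top)\vec{\Theta^\star} + w_t$, since $\vec{\cdot}$ stacks rows. The same identity turns $\fnorm{x_{k+1}-\Theta\varphi_k}^2$ into $\norm{y_{k+1}-M_k\theta}^2$ with $y_{k+1}=x_{k+1}$. Hence~\eqref{eq:opti-estimates} with $\Pprior=0$ yields $\hat\theta_t = \vec{\hat\Theta_t}$. Moreover, $M_k^\top M_k = I_\nx \otimes \varphi_k\varphi_k^\top$, so $V_t = I_\nx\otimes \Phi_t$. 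For any $\Delta\in\R^{\nx\times\nphi}$ we then get
\begin{equation*}
\norm{\vec{\Delta}}^2_{I_\nx\otimes\Phi_t} = \sum_{i} \Delta_i \Phi_t \Delta_i^\top = \Trace{\Delta\Phi_t\Delta^\top} = \fnorm{\Delta\Phi_t^\half}^2,
\end{equation*}
where $\Delta_i$ denotes the rows of $\Delta$. With $\Delta = \Theta^\star-\hat\Theta_t$, the left-hand side of~\eqref{eq:confidence-set} becomes exactly the event in~\eqref{eq:lti-confidence-set}.

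\textbf{Step 2: apply Theorem~\ref{thm:noprior} with $\bar V \coloneqq I_\nx\otimes\bar\Phi \GEQS 0$.} Two quantities must be simplified. First, $V_t^{-1}\bar V = I_\nx\otimes \Phi_t^{-1}\bar\Phi$, whose spectrum equals that of $\Phi_t^{-1}\bar\Phi$. Hence $\rho\of{V_t^{-1}\bar V} = \rho\of{\Phi_t^{-1}\bar\Phi}$, and $V_t$ is invertible iff $\Phi_t$ is, which matches the $+\infty$ convention. Second, $I_\ntheta + \bar V^{-1}V_t = I_\nx\otimes(I_\nphi + \bar\Phi^{-1}\Phi_t)$. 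Using $\det{I_\nx\otimes A} = \det{A}^{\nx}$, its log-determinant equals $\nx\logdet{I_\nphi+\bar\Phi^{-1}\Phi_t}$.

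Substituting these into~\eqref{eq:beta-no-prior} with $\ctheta=0$ gives $\sqrt{1+\rho(\Phi_t^{-1}\bar\Phi)}\,\sqrt{\cw^2\nx\logdet{\cdot} + 2\cw^2\log(1/\delta)}$. Pulling out $\cw$ yields exactly $\beta_t^{\mr{LTI}}$.

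\textbf{Main obstacle.} No step is deep; the only delicate point is bookkeeping. One must verify that the row-stacking convention for $\vec{\cdot}$ is consistent with $M_t = I_\nx\otimes\varphi_t^\top$; a column-stacking convention would instead give $\varphi_t^\top\otimes I_\nx$ and $V_t = \Phi_t\otimes I_\nx$. Either way, the spectral and determinant identities are unchanged. One must also check that the identification with $W = I_\nx$ correctly reproduces the OLS estimate~\eqref{eq:OLS-lti}.
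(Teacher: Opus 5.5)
Your proposal is correct and matches the paper's proof: apply Theorem~\ref{thm:noprior} to the vectorized problem with $\Pprior=0$, $\ctheta=0$, $V_t = I_\nx\otimes\Phi_t$ and $\bar{V} = I_\nx\otimes\bar{\Phi}$, then convert the weighted norm and the log-determinant using the Kronecker identities; your explicit check that $\rho\of{V_t^{-1}\bar{V}} = \rho\of{\Phi_t^{-1}\bar{\Phi}}$ fills in a step the paper leaves implicit. One small quibble: $W=I_\nx$ is not actually needed to reproduce~\eqref{eq:OLS-lti}, because with $M_t = I_\nx\otimes\varphi_t^\top$ the estimate does not depend on $W$; what $W=I_\nx$ does is make $V_t = I_\nx\otimes\Phi_t$ and fix the meaning of $\cw$.
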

\begin{proof}
    Apply Theorem~\ref{thm:noprior} to the vectorized version of the problem, which satisfies $V_t = I_\nx \otimes \Phi_t$.
    The resulting bound can be formulated as~\eqref{eq:lti-confidence-set}-\eqref{eq:lti-beta} using the following properties of the Kronecker product:
    \begin{align}
        &\norm{\vec{\Theta^\star - \hat{\Theta}_t}}_{I_\nx \otimes \Phi_t}^2 = \fnorm{\of{\Theta^\star - \hat{\Theta}_t} \Phi_t^\half}^2,
    \end{align}
    and for any $\tilde{\Phi} \in \R^{\nphi \times \nphi}$, $\bigdet{I_\nx \otimes \tilde{\Phi}} = \bigdet{\tilde{\Phi}}^{\nx}$.
\end{proof}
This result was proven for pointwise probability bounds only in~\cite[Theorem IV.1 (first part)]{ziemann2023tutorial}.
In~\cite[Theorem 1]{abbasi2011regret} or~\cite[equation (S3)]{tsiamis2023statistical}, a looser bound is derived in similar settings by applying univariate SNM bounds on each state component and combining them with a union bound.
Using the inequalities $\norm{E \tilde{\varphi}} \le \onorm{E} \norm{\tilde{\varphi}} \le \fnorm{E} \norm{\tilde{\varphi}}$ on an appropriate matrix $E$ and vector $\tilde{\varphi}$, one can use~\eqref{eq:lti-confidence-set} to bound the one-step-ahead predictions, as in~\eqref{eq:prediction-error-bound}:
\begin{align}\label{eq:prediction-error-bound-lti}
    \P \bigg[
        \norm{\Theta^\star \varphi - \hat{\Theta}_t \varphi } \le & \beta_t^{\mr{LTI}} \norm{\varphi}_{\Phi_t^{-1}},
        \\ \nonumber & \hspace{10mm}
        \forall t \in \I, \forall \varphi \in \R^{\nphi}
    \bigg] \ge 1 - \delta.
\end{align}
There exists another common approach --- e.g. in~\cite[Theorem IV.1 (second part)]{ziemann2023tutorial} --- that bounds the operator norm instead of the Frobenius norm in~\eqref{eq:lti-confidence-set} with the following right-hand side:
\begin{align}\label{eq:lti-bound-operator-norm}
    \tilde{\beta}_t^{\mr{LTI}} = 2\cw \sqrt{1+\rho\of{\Phi_t^{-1} \bar{\Phi}}}&
    \\ \nonumber&\hspace{-27mm}
    \sqrt{\logdet{I_{\nphi} + \bar{\Phi}^{-1}\Phi_t} + 2\nx \log\of{5} + 2\log\of{1/\delta}}.
\end{align}
The proof of this bound relies on combining univariate SNM bounds with a covering argument.
When only output bounds of the form~\eqref{eq:prediction-error-bound-lti} are desired, this might be preferable for large-scale systems.
Indeed, while the operator norm is always lower than the Frobenius norm, one can directly replace $\beta_t^{\mr{LTI}}$ by $\tilde{\beta}_t^{\mr{LTI}}$ in \eqref{eq:prediction-error-bound-lti} which becomes smaller for large values $\nx$ and $\det{I_{\nphi} + \bar{\Phi}^{-1}\Phi_t}$ --- see~\cite[Remark IV.1]{ziemann2023tutorial}.
% CAN BE REMOVED:
This was also presented in~\cite{tsiamis2023statistical} under slightly more restrictive assumptions.
% CAN BE REMOVED:
This was also proven only for pointwise probability bounds, but there does not seem to be any fundamental difficulty in adapting the proof for uniform-in-time bounds.

\section{Numerical experiments}\label{section-numerical-experiments}
This section illustrates the proposed finite-sample bounds on two examples\footnotemark.
The first one is a scalar polynomial regression problem, used to compare the new results with the classical scalar-output bound from~\cite{abbasi2011improved}.
The second one is a multi-output system-identification problem, used to demonstrate the practical benefit of the new results: they are applicable to a structure-exploiting identification approach, whereas the existing results can only be applied to a less efficient structure-agnostic identification approach.
\footnotetext{
    All experiments are reproducible using the Python code associated with this paper, available at
    \underline{https://github.com/Leo-Simpson/finite-sample-bounds}
}

\subsection{Example 1: a scalar polynomial estimation task}
\label{subsec:exp_poly}

We consider the following polynomial identification task:
\begin{equation}\label{eq:SO-example}
    y_{t+1} = \overbrace{\theta^\star_0 + \theta^\star_1 u_t + \theta^\star_2 u_t^2 + \theta^\star_3 u_t^3}^{\eqqcolon g(u_t;\theta^\star)} \; + w_t,
\end{equation}
with $w_t \sim \Gauss{0}{\cw^2}$ and $\cw=0.2$.
The inputs are generated by the following feedback law
\begin{align}
    u_t = \mr{clip}\of{r_t - \sum_{k=0}^t y_k}, \qquad
    r_t \coloneqq 2 \sin(0.1 t + \varphi),
\end{align}
where $\mr{clip}(\cdot)$ is the projection to the interval $[-1,1]$ and $\varphi \sim \cl{U}([0,2\pi])$.
This closed-loop data-generation mechanism induces intractable regressor-noise correlations, which motivates the use of SNM bounds.
To generate data, we use $g(u;\theta^\star) \coloneqq u - u^3$ and $\tmax = 20$.
The estimates $\hat{\theta}_t$ are computed from~\eqref{eq:opti-estimates} with $W=1$ and with a regularization defined by $\mu=0$ and $\norm{\theta}^2_{\Pprior} = \int_{-1}^{1} g(u;\theta)^2 \,\mr{d}u$.

Figure~\ref{fig-illustrative-runs} shows the output bounds~\eqref{eq:output-bound} obtained using Theorem~\ref{thm:confidence-set} with $\delta=0.05$ and $\ctheta = \norm{\theta^\star}_{\Pprior}$.
The top plots display examples of realizations, while the bottom plot overlays the output confidence bands from $20$ independent runs.
The true function $g(\cdot;\theta^\star)$ is contained in more than $95\%$ of these confidence bands, consistent with the theory.
\begin{figure}[!t]
    \centerline{\includegraphics[width=\columnwidth]{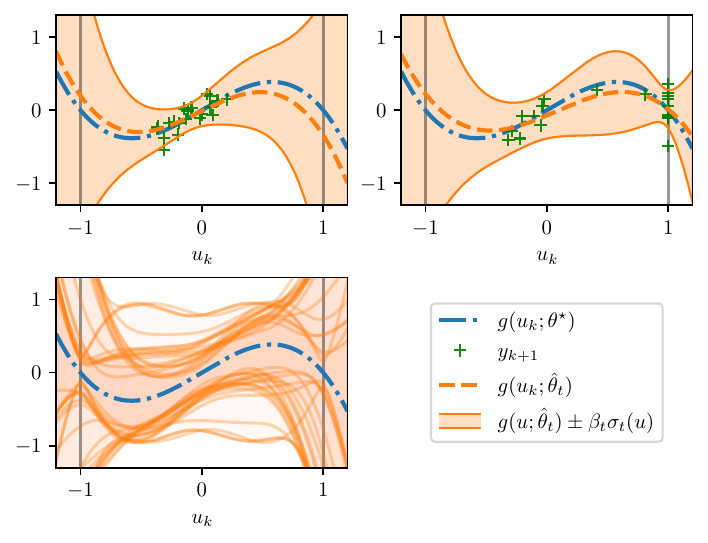}}
	\caption{
        Output bounds~\eqref{eq:output-bound} corresponding to Theorem~\ref{thm:confidence-set} for different realizations of example~\eqref{eq:SO-example}.
    }\label{fig-illustrative-runs}
\end{figure}

Next, we compare the novel bound from Theorem~\ref{thm:confidence-set} to the more conservative bound from~\cite[Theorem 2]{abbasi2011improved} repeated here in~\eqref{eq:abbasi2011improved}.
Figure~\ref{fig-ex1-beta} plots the corresponding right-hand sides $\beta_t$ and $\tilde{\beta_t}$ for $\delta=0.05$ as a function of the prior bound $\ctheta$, together with the realized left-hand side of the inequality over $20$ independent runs.
The theory predicts that, in the limit of infinitely many runs, at least $95\%$ of the right-hand sides $\beta_t$ (orange lines) and $\tilde{\beta}_t$ (purple lines) dominate their corresponding left-hand side (blue lines).

Figure~\ref{fig-violation-vs-delta} reports the empirical violation frequency of the bounds over $10^4$ independent realizations as a function of $\delta$.
Here, the bounds are evaluated for $\ctheta = \norm{\theta^\star}_{\Pprior}$.
The observed violation frequencies remain below the confidence level $\delta$, as predicted by the theory.
Moreover, although the reduction from $\tilde{\beta_t}$ to $\beta_t$ might seem marginal in Figure~\ref{fig-ex1-beta},
Figure~\ref{fig-violation-vs-delta} shows that it translates into a substantial improvement in the tightness of the probability bound.
\begin{figure}[!t]
    \centerline{\includegraphics[width=\columnwidth]{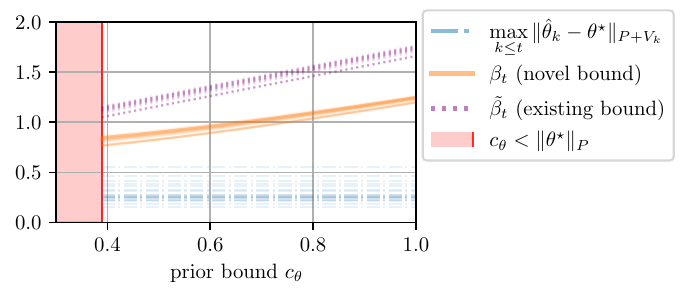}}
	\caption{
        Left-hand side (blue) and right-hand sides $\beta_t$ (orange) and $\tilde{\beta}_t$ (purple) of the finite-sample bounds as a function of $\ctheta$.
    }\label{fig-ex1-beta}
\end{figure}
\begin{figure}[!t]
    \centerline{\includegraphics[width=\columnwidth]{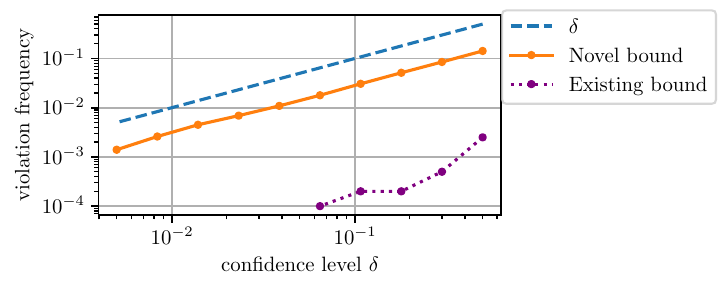}}
	\caption{
        Empirical violation frequency of the finite-sample bounds from Theorem~\ref{thm:confidence-set} (``Novel bound'') and ~\cite[Theorem 2]{abbasi2011improved} (``Existing bound'') as a function of $\delta$.
    }\label{fig-violation-vs-delta}
\end{figure}

\subsection{Example 2: a system identification problem}\label{subsection:example-multi}
We next consider a heat-transfer model with $\nx=5$ internal temperature states $x_t \coloneqq \big[ \vartheta^{[1]}_t \; \dots \; \vartheta^{[\nx]}_t \big]^\top$ and two exogenous boundary temperatures $u_t \coloneqq \big[ \vartheta^{[0]}_t \quad \vartheta_t^{\mr{ext}} \big]^\top$, interacting through convection.
The dynamics are modeled as follows:
\begin{align}\label{eq:heat-exchange}
    &\text{for } i = 1, \dots, \nx:  \\ \nonumber
    & \quad \vartheta^{[i]}_{t+1} = \vartheta^{[i]}_{t} + \alpha \of{\vartheta^{[i-1]}_{t} \!\! - \vartheta^{[i]}_{t}} + \beta \of{\vartheta_t^{\mr{ext}} \!\!- \vartheta^{[i]}_{t}} + w^{[i]}_t,
\end{align}
with unknown parameters $\alpha, \beta \in (0, 1)$ and noise terms $w^{[i]}_t$ independent and uniformly distributed: $w^{[i]}_t \sim \cl{U}([-1,1])$.
Thus, for $W=I_{\nx}$, the noise vectors $w_t \in \R^{\nx}$ satisfy Assumption~\ref{assum:subgaussian} with $\cw = \frac{1}{\sqrt{3}}$ (this is sharper than the constant $\cw=1$ given by Hoeffding's inequality). 
% OPTIONAL ALTERNATIVE:
% The noise vectors $w_t \in \R^{\nx}$ are therefore subgaussian with variance proxy $Q = \frac{1}{3}I_{\nx}$.
The ambient temperature is kept constant $\vartheta_t^{\mr{ext}}=25 ~^{\circ}\text{C}$ and the temperature of the first node excites the system: $\vartheta^{[0]}_t = \vartheta_{\max} \of{ 1 + \sin(0.1 t)}$, with $\vartheta_{\max} = 100 ~^{\circ}\text{C}$.
The initial temperatures are set to $\vartheta^{[i]}_0 = 20 ~^{\circ}\text{C}$.
The true (unknown) parameters are $\alpha^\star = 0.5$, $\beta^\star = 0.1$.

This system is an affine-in-parameter model in the form of~\eqref{eq:dynamical-system}, which can be identified by estimating $\theta \coloneqq [ \alpha \quad \beta ]^\top$ directly.
This is done with OLS --- i.e., \eqref{eq:opti-estimates} with $W = I_\nx$ and $\Pprior = 0$ --- and we apply Theorem~\ref{thm:noprior} on these estimates with $\bar{V} = {\vartheta^2_{\max}} I_2$, $\delta=0.05$, $\ctheta=0$, and $\cw = \frac{1}{\sqrt{3}}$.
The resulting finite-sample bounds translate into one-step-ahead prediction error bounds as in~\eqref{eq:prediction-error-bound}.
At each time $t$, we consider the worst-case bound over $(x, u) \in \cl{S}$, where $\cl{S}$ is a set consisting of $10^3$ inputs and states generated randomly as
$(x, u) \sim \cl{U}\of{[0, \vartheta_{\max}]^{\nx+2}}$.
These bounds read as follows:
\begin{equation}\label{eq:prediction-bound-max}
    \underbrace{
        \max_{(x, u) \in \cl{S}} \!\norm{f(x, u; \theta^\star) - f(x, u; \hat{\theta}_t)}
    }_{\text{l.h.s. in param. id.}}
    \le 
    \underbrace{
        \beta_t \!\max_{(x, u) \in \cl{S}} \!\sigma_t(x, u)
    }_{\text{r.h.s. in param. id.}},
\end{equation}
where $f(x, u; \theta)$ models the system~\eqref{eq:heat-exchange} and $\sigma_t(x, u)$ is computed as in~\eqref{eq:prediction-error-bound}.

By contrast, the existing scalar-output bounds do not apply, because each parameter affects several state components simultaneously.
A natural baseline is therefore the structure-agnostic LTI identification approach discussed in Section~\ref{subsection:lti}.
For this approach, we apply the bound from~Corollary~\ref{corollary:lti} with $\bar{\Phi} = {\vartheta^2_{\max}} I_{\nx+2}$ and $\delta=0.05$ to the same data.
As above, we translate these bounds into worst-case one-step-ahead prediction error bounds over the same set $\cl{S}$ using~\eqref{eq:prediction-error-bound-lti}:
\begin{equation}\label{eq:prediction-bound-max-lti}
    \underbrace{
        \max_{\varphi \in \cl{S}} \norm{\of{\Theta^\star - \hat{\Theta}_t}\varphi}
    }_{\text{l.h.s. in LTI id.}}
    \le
    \underbrace{
        \beta_t^{\mr{LTI}} \max_{\varphi \in \cl{S}} \norm{\varphi}_{\Phi_t^{-1}},
    }_{\text{r.h.s. in LTI id.}}
\end{equation}
with $\varphi \coloneqq [x \quad u]^\top$, $\Theta^\star \coloneqq [A^\star \, B^\star]$ where $A^\star, B^\star$ are the matrices associated with the system~\eqref{eq:heat-exchange}, and $\hat{\Theta}_t \coloneqq [\hat{A}_t \, \hat{B}_t]$ are the corresponding estimates.
As discussed in Section~\ref{subsection:lti}, we can also replace $\beta_t^{\mr{LTI}}$ by the bound on the operator norm $\tilde{\beta}_t^{\mr{LTI}}$ defined in~\eqref{eq:lti-bound-operator-norm}.

Figure~\ref{fig-heat-exchange} compares the one-step-ahead error bounds~\eqref{eq:prediction-bound-max} and~\eqref{eq:prediction-bound-max-lti} as a function of the number of data points $t$ used for estimation.
% CAN BE REMOVED:
The orange lines correspond to inequality~\eqref{eq:prediction-bound-max} --- which results from Theorem~\ref{thm:noprior} ---, the purple lines correspond to inequality~\eqref{eq:prediction-bound-max-lti} --- the line ``Fr.'' corresponds to Corollary~\ref{corollary:lti} and the line ``op.'' corresponds to the variation that uses $\tilde{\beta}_t^{\mr{LTI}}$ instead of $\beta_t^{\mr{LTI}}$.
\begin{figure}[!t]
    \centerline{\includegraphics[width=\columnwidth]{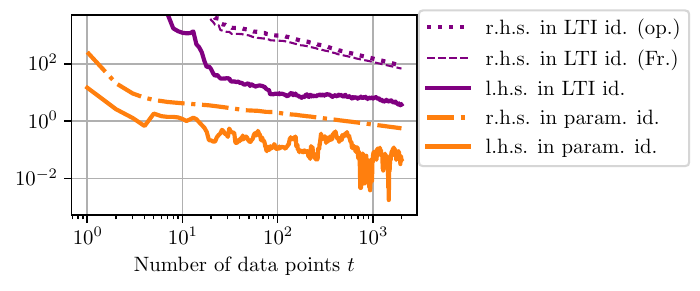}}
	\caption{
        Left and right-hand sides of the bounds~\eqref{eq:prediction-bound-max} and~\eqref{eq:prediction-bound-max-lti} for the heat-transfer example~\eqref{eq:heat-exchange}.
    }\label{fig-heat-exchange}
\end{figure}

This experiment shows that the structure-exploiting method yields substantially smaller prediction errors as well as much smaller upper bounds.
This indicates that the present results allow much smaller error bounds than the standard approach in practical setups.
Moreover, Figure~\ref{fig-heat-exchange} also shows that the bounds are reasonably tight, especially for the structure-exploiting approach.
Indeed, the right-hand side of the bound remains close to its corresponding left-hand side. 
The two LTI-based bounds obtained from~\eqref{eq:lti-beta} (bound on the Frobenius norm) and~\eqref{eq:lti-bound-operator-norm} (bound on the operator norm) exhibit similar behavior in this example, with the former being slightly tighter.
Overall, Figure 4 highlights the practical value of the new multi-output result: it enables finite-sample guarantees for structure-exploiting identification, instead of restricting these guarantees to structure-agnostic identification, which is often less efficient.

\section{Conclusion}\label{section-conclusion}
We derived uniform-in-time finite-sample bounds for regularized linear regression with vector-valued outputs and conditionally zero-mean subgaussian noise.
By revisiting the standard martingale arguments, we obtained tighter bounds than the state of the art.
More importantly, the proposed bounds apply directly to multi-output regression and are therefore strictly more general than existing results.
As illustrated in the numerical example, this is a substantial advantage for system identification, since the bounds can be applied to structure-exploiting identification approaches.

Promising directions for future work include extensions to kernel-based regression and the use of these bounds in practical use cases, for example, in model-based control with online learning of model parameters.
% OPTIONAL SENTENCE:
%  --- and extensions to nonlinear regression using model approximations based on linearization techniques.

%% file: appendix.tex
\renewcommand{\nu}{u} % here we use u instead of \nu to avoid confusion with the variable v.
\begin{proof}[Proof of Lemma \ref{lemma:subgaussian}]
    Let $v \in \R^{\ntheta}$ be a zero-mean subgaussian random variable with proxy covariance matrix $R \GEQ 0$, i.e., it satisfies:
    \begin{equation}\label{eq:proof-subgaussian-1}
        \forall \nu \in \R^{\ntheta}, \quad
        \Eof{\exp{\nu^\top v}} \le \exp{\frac{1}{2} \norm{\nu}^2_R}.
    \end{equation}
    Let $H \GEQS 0$ and $x \in \R^{\ntheta}$ be fixed.
    Multiplying the inequality \eqref{eq:proof-subgaussian-1} by $\exp{-\frac{1}{2} \norm{\nu}_{H+R}^2 + \nu^\top x}$ and integrating both sides over $\nu \in \R^{\ntheta}$ yields:
    \begin{equation}\label{eq:proof-subgaussian-2}
        \Eof{\int_{\R^{\ntheta}}\hspace{-2mm}\exp{-\frac{1}{2}\norm{\nu}_{H+R}^2 + \nu^\top (x+v)  }  \mr{d}\nu } \le \hspace{-1mm}\int_{\R^{\ntheta}}\hspace{-2mm}\exp{-\frac{1}{2} \norm{\nu}_H^2 + \nu^\top x }  \mr{d}\nu
    \end{equation}
    To compute these integrals, we use the following integral formula derived from the normalizing constant in the Gaussian distribution $\Gausstiny{\Sigma b}{\Sigma}$:
    \begin{equation}\label{eq:gauss-integral}
        \forall b, \Sigma: \int_{\R^n} \exp{-\frac{1}{2} \norm{\nu}_{\Sigma^{-1}}^2 + \nu^\top b} \mr{d}\nu =
        \sqrt{(2 \pi)^n \det{\Sigma}} \exp{\frac{1}{2} \norm{b}_{\Sigma}^2 }.
    \end{equation}
    Using this formula on both sides of the inequality \eqref{eq:proof-subgaussian-2} with $\Sigma = (H+R)^{-1}$ for the left-hand side and $\Sigma=H^{-1}$ for the right-hand side, we obtain:
    \begin{align}\label{eq:proof-subgaussian-3}
        \Eof{\sqrt{\frac{(2 \pi)^n}{\det{H+R}}} \exp{\frac{1}{2} \norm{x+v}_{(H+R)^{-1}}^2 } \hspace{-1.1mm}} &\le
        \sqrt{\frac{(2 \pi)^n}{\det{H}}} \exp{\frac{1}{2} \norm{x}_{H^{-1}}^2 }.
    \end{align}
    Eliminating the common factor $\sqrt{(2 \pi)^{\ntheta}}$ on both sides, we obtain the desired property.
     \begin{equation}
        \frac{1}{\sqrt{\det{H+R}}} \Eof{e^{\half\norm{x + v}_{\of{H + R}^{-1} }^2} }
	    \leq \frac{1}{\sqrt{\det{H}}} e^{\half\norm{x}_{H^{-1}}^2}
    \end{equation}
\end{proof}

\begin{proof}[Proof of Lemma \ref{lemma:supermartingale}]
    Let $q_0, \dots, q_{\tmax}$ be non-negative supermartingale, i.e., a sequence of non-negative random variables such that, $q_t$ is a function of $\cl{D}_t$ only, and satisfies $q_t \ge \Eof{q_{t+1} \mid \cl{D}_t}$.
    Let $\delta \in (0, 1)$ be a scalar.
    Define the event $\event_t \coloneqq \curlyof{ q_k \le \frac{\Eof{q_0}}{\delta}, \; \forall k \leq t}$, which is a function of $\cl{D}_t$ only.
    Then, for $t \in \I$, define the auxiliary random variable $a_t$ according to:
    \begin{equation}\label{eq:proof-supermartingale-1}
        a_0 = q_0, \quad a_{t+1} = \begin{cases} q_{t+1} & \text{if } \event_t, \\ a_t & \text{otherwise}. \end{cases}
    \end{equation}
    On the one hand, this implies:
    \begin{equation}\label{eq:proof-supermartingale-2}
        \begin{aligned}
            \Eof{a_{t+1} \mid \event_t } &= \Eof{q_{t+1} \mid \event_t } \\
            &= \Eof{ \Eof{q_{t+1} \mid \event_t, \cl{D}_t} \mid \event_t } \\
            &= \Eof{ \Eof{q_{t+1} \mid \cl{D}_t} \mid \event_t } \\
            &\le \Eof{ q_t \mid \event_t } = \Eof{ a_t \mid \event_t }.
        \end{aligned}
    \end{equation}
    On the other hand, we also have $\Eof{a_{t+1} \mid \event_t^{\text{c}} } = \Eof{a_t \mid \event_t^{\text{c}} }$, where $\event_t^{\text{c}}$ denotes the complement of the event $\event_t$.
    These two facts imply that $\Eof{a_{t+1}} \le \Eof{a_t}$.
    Repeating this argument for all $t = 0, \dots, \tmax - 1$, we obtain $\Eof{a_{\tmax}} \le \Eof{a_0} = \Eof{q_0}$.
    Thus, applying Markov's inequality to $a_{\tmax}$ yields:
    \begin{equation}\label{eq:proof-supermartingale-3}
        \Pof{ a_{\tmax} \le \frac{\Eof{q_0}}{\delta} } \ge 1 - \delta.
    \end{equation}
    To conclude, note that
    \begin{equation}\label{eq:proof-supermartingale-4}
        \of{a_{\tmax} \le \frac{\Eof{q_0}}{\delta}} \iff \of{q_t \le \frac{\Eof{q_0}}{\delta} \, \forall t \in \I}.
    \end{equation}
    Combining this with \eqref{eq:proof-supermartingale-3} yields the desired result
    \begin{equation}
        \Pof{ q_t \le \frac{\Eof{q_0}}{\delta} \quad \forall t \in \I } \ge 1 - \delta.
    \end{equation}
\end{proof}

%% file: biblio.bib
@String { CDC      = {Proc. IEEE Conference of Decision and Control (CDC)} }

@String { TOR     = {IEEE Trans. on Robotics}}

@String { magazin  = {IEEE Control Systems Magazine} }

@inproceedings{abbasi2011improved,
 author = {Abbasi-Yadkori, Yasin and P\'{a}l, D\'{a}vid and Szepesv\'{a}ri, Csaba},
 booktitle = {Advances in Neural Information Processing Systems},
 pages = {},
 title = {Improved Algorithms for Linear Stochastic Bandits},
 volume = {24},
 year = {2011}
}

@InProceedings{abbasi2011regret,
  title = 	 {Regret Bounds for the Adaptive Control of Linear Quadratic Systems},
  author =       {Abbasi-Yadkori, Yasin and Szepesv\'ari, Csaba},
  booktitle = 	 {Proceedings of the 24th Annual Conference on Learning Theory},
  pages = 	 {1--26},
  year = 	 {2011},
  volume = 	 {19},
  publisher = {PMLR},
}

@phdthesis{abbasi2013online,
  title={Online learning for linearly parametrized control problems},
  author={Abbasi-Yadkori, Yasin},
  school={University of Alberta},
  year={2012}
}

@article{ao2025stochastic,
  title={Stochastic model predictive control for sub-gaussian noise},
  author={Ao, Yunke and K{\"o}hler, Johannes and Prajapat, Manish and As, Yarden and Zeilinger, Melanie and F{\"u}rnstahl, Philipp and Krause, Andreas},
  journal={arXiv preprint arXiv:2503.08795},
  year={2025}
}

@incollection{boucheron2003concentration,
  title={Concentration inequalities},
  author={Boucheron, St{\'e}phane and Lugosi, G{\'a}bor and Bousquet, Olivier},
  booktitle={Summer school on machine learning},
  pages={208--240},
  year={2003},
  publisher={Springer}
}

@InProceedings{chowdhury2017kernelized,
  title = 	 {On Kernelized Multi-armed Bandits},
  author =       {Sayak Ray Chowdhury and Aditya Gopalan},
  booktitle = 	 {Proceedings of the 34th International Conference on Machine Learning},
  pages = 	 {844--853},
  year = 	 {2017},
  volume = 	 {70},
  publisher =    {PMLR},
}

@book{DeLaPena2009self,
  title={Self-normalized processes: Limit theory and statistical applications},
  author={De la Pena, Victor H and Lai, Tze Leung and Shao, Qi-Man},
  year={2009},
  serie={Probability and Its Applications},
  publisher={Springer}
}

@article{hou2013model,
  title={From model-based control to data-driven control: Survey, classification and perspective},
  author={Hou, Zhong-Sheng and Wang, Zhuo},
  journal={Information Sciences},
  volume={235},
  pages={3--35},
  year={2013},
  publisher={Elsevier}
}

@inproceedings{koller2018learning,
  title={Learning-based model predictive control for safe exploration},
  author={Koller, Torsten and Berkenkamp, Felix and Turchetta, Matteo and Krause, Andreas},
  booktitle=CDC,
  pages={6059--6066},
  year={2018},
  organization={IEEE}
}

@article{kosut2002set,
  title={Set-membership identification of systems with parametric and nonparametric uncertainty},
  author={Kosut, Robert L and Lau, Ming K and Boyd, Stephen P},
  journal={IEEE Transactions on Automatic Control},
  volume={37},
  number={7},
  pages={929--941},
  year={2002},
  publisher={IEEE}
}

@article{lahr2026optimal,
      title={Optimal uncertainty bounds for multivariate kernel regression under bounded noise: A {G}aussian process-based dual function}, 
      author={Amon Lahr and Anna Scampicchio and Johannes Köhler and Melanie N. Zeilinger},
      year={2026},
      journal={arXiv preprint arXiv:2603.16481},
}

@article{lew2022safe,
  title={Safe active dynamics learning and control: A sequential exploration--exploitation framework},
  author={Lew, Thomas and Sharma, Apoorva and Harrison, James and Bylard, Andrew and Pavone, Marco},
  journal=TOR,
  volume={38},
  number={5},
  pages={2888--2907},
  year={2022},
  publisher={IEEE}
}

@Book{ljung1999system,
  Title                    = {{S}ystem identification: Theory for the User},
  Author                   = {L. Ljung},
  Publisher                = {Prentice Hall},
  Year                     = {1999},
  Address                  = {Upper Saddle River, N.J.},
}

@incollection{ljung1976consistency,
  title={On the consistency of prediction error identification methods},
  author={Ljung, Lennart},
  booktitle={Mathematics in Science and Engineering},
  volume={126},
  pages={121--164},
  year={1976},
  publisher={Elsevier}
}

@article{maddalena2021deterministic,
  title={Deterministic error bounds for kernel-based learning techniques under bounded noise},
  author={Maddalena, Emilio Tanowe and Scharnhorst, Paul and Jones, Colin N},
  journal={Automatica},
  volume={134},
  pages={109896},
  year={2021},
  publisher={Elsevier}
}

@inproceedings{matni2019tutorial,
  title={A tutorial on concentration bounds for system identification},
  author={Matni, Nikolai and Tu, Stephen},
  booktitle={2019 IEEE 58th Conference on Decision and Control (CDC)},
  pages={3741--3749},
  year={2019},
  organization={IEEE}
}

@book{shiryaev2019probability,
  title={Probability-2},
  author={Shiryaev, Albert N},
  year={2019},
  serie={Graduate Texts in Mathematics},
  publisher={Springer}
}

@inproceedings{sui2015safe,
  title={Safe exploration for optimization with {G}aussian processes},
  author={Sui, Yanan and Gotovos, Alkis and Burdick, Joel and Krause, Andreas},
  booktitle={International conference on machine learning},
  pages={997--1005},
  year={2015},
  organization={PMLR}
}

@article{tsiamis2023statistical,
  title={Statistical learning theory for control: A finite-sample perspective},
  author={Tsiamis, Anastasios and Ziemann, Ingvar and Matni, Nikolai and Pappas, George J},
  journal=magazin,
  volume={43},
  number={6},
  pages={67--97},
  year={2023},
  publisher={IEEE}
}

@InProceedings{wagenmaker2020active,
  title = 	 {Active Learning for Identification of Linear Dynamical Systems},
  author =       {Wagenmaker, Andrew and Jamieson, Kevin},
  booktitle = 	 {Proceedings of Thirty Third Conference on Learning Theory},
  pages = 	 {3487--3582},
  year = 	 {2020},
  volume = 	 {125},
  publisher =    {PMLR},
}

@inproceedings{ziemann2023tutorial,
  title={A tutorial on the non-asymptotic theory of system identification},
  author={Ziemann, Ingvar and Tsiamis, Anastasios and Lee, Bruce and Jedra, Yassir and Matni, Nikolai and Pappas, George J},
  booktitle=CDC,
  pages={8921--8939},
  year={2023},
  organization={IEEE}
}
